\numberwithin{figure}{section}
\newtheorem{thm}{Theorem}[section]
\newtheorem{cor}[thm]{Corollary}
\newtheorem{prop}[thm]{Proposition}
\newtheorem{lem}[thm]{Lemma}
\newtheorem{conj}[thm]{Conjecture}
\theoremstyle{definition}
\newtheorem{defn}[thm]{Definition}
\newtheorem{exmp}[thm]{Example}
\newtheorem{notn}[thm]{Notation}
\theoremstyle{remark}
\newtheorem{rem}[thm]{Remark}
\newcommand{\eqal}{\begin{equation}\begin{aligned}}
\newcommand{\eqalnt}{\begin{equation}\begin{aligned}\notag}
\newcommand{\eeqal}{\end{aligned}\end{equation}}
\newcommand{\fm}{{ s }}
\newcommand{\fN}{L}
\newcommand{\nm}{{\mathcal{N}}}
\newcommand{\NN}{{\mathbb N}}
\newcommand{\QQ}{{\mathbb Q}}
\newcommand{\ZZ}{{\mathbb Z}}
\newcommand{\sa}{{\mathfrak{P}}}
\newcommand{\sO}{{\mathcal{O}}}
\numberwithin{equation}{section}
\title{Counting extensions of number fields with Frobenius Galois group}
\author{Harsh Mehta}
\begin{document}

\begin{abstract}

Let $G$ be a Frobenius group with an abelian Frobenius kernel $F$ and let $k$ be a finite extension of $\mathbb{Q}$. 
We obtain an upper bound for the number of degree $|F|$ algebraic extensions $K/k$ with Galois group $G$ with the norm of the discriminant $\mathcal{N}_{k/\mathbb{Q}}(d_{K/k})$ bounded above by $X$. We extend this method for any group $G$ that has an abelian normal subgroup. If $G$ has an abelian normal subgroup, then we obtain upper bounds for the number of degree $|G|$ extensions $N/k$ with Galois group $G$ with bounded norm of the discriminant. Malle made a conjecture about what the order of magnitude of this quantity should be as the degree of the extension $d$ and underlying Galois group $G$ vary. We show that under the $\ell$-torsion conjecture, the upper bounds we achieve for certain pairs $d$ and $G$ agree with the prediction of Malle. Unconditionally we show that the upper bound for the number of degree 6 extensions with Galois group $A_4$ also satisfies Malle's weak conjecture.  
\end{abstract}

\maketitle

\subsection{This is a preliminary version of this work}
 \section{Introduction}

Let $K/k$ be a degree $d$ extension that lies in a fixed algebraic closure of $\QQ$. 
 We assume that $G=\text{Gal}(\hat{K}/{k})$ is non-trivial and view it as a permutation group $G\leq S_d$ that acts transitively on $d$ letters. We are interested in $N_d(k,G;X)$ as $X\rightarrow\infty$, where
 \eqal\label{goal}
 N_d(k,G;X)&=|\{ K/k : \mbox{Gal}(\hat{K}/k)\cong G, [K:k]=d \mbox{, and } \nm_{k/\QQ}(d_{K/k})\leq X\}|.\\
 \eeqal 
Malle made a conjecture \cite{Mal04} on what the order of magnitude of $N_d(k,G;X)$ should be, and in order to
state it we need notation. 

\begin{defn}\label{agd} Let $G$ be a non-trivial subgroup of the permutation group $S_d$. Let $G$ act transitively on $[d]:=\{1,2,\dots,d\}$ and let $g\in G$.   

1. The index of $g$, is $\mbox{ind}(g):=d-$ the number of orbits of $g$ on $[d].$

2. $\mbox{ind}(G):=\min\{\mbox{ind}(g): 1\neq g\in G\}.$

3. $a(G,d):=1/\mbox{ind}(G).$
\end{defn}
Now we are in a position to state what Malle conjectured. Malle conjectured that 
\[N_d(k,G;X)\sim c(k,G)X^{a(G,d)}\log^{b(k,G,d)-1}(X), \]
for a certain explicit function $b$ that depends on $k$, $G$ and $d.$ For a precise formulation of his original conjecture see \cite{Mal04}. For our purpose it suffices to consider a weaker form of the conjecture. 
\begin{conj}\label{Malle}\textnormal{[Malle's weak conjecture]}
For any non-trivial group $G\leq S_d$ acting transitively on $[d]$, and any number field $k$,
\eqal\label{M1}
 X^{a(G,d)}\ll N_d(k,G;X)\ll X^{a(G,d)+\epsilon}\\
 \eeqal
 holds for all $\epsilon>0$ as $X\rightarrow\infty.$
\end{conj}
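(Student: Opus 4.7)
The plan is to attack the two inequalities of Malle's weak conjecture separately, and to restrict attention to groups $G$ admitting an abelian normal subgroup $F$, which is the setting of this paper; the conjecture is open in full generality, and even in this restricted class the upper bound requires the $\ell$-torsion conjecture as an additional input to reach the predicted exponent.

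For the lower bound $X^{a(G,d)}\ll N_d(k,G;X)$ I would proceed by an explicit construction. Pick $g\in G$ realizing the minimum $\mathrm{ind}(G)$, and write $G\cong F\rtimes H$ with $H=G/F$. Fix one auxiliary Galois $H$-extension $L_{0}/k$ and parametrize $G$-extensions $N/k$ containing $L_{0}$ by abelian $F$-extensions $N/L_{0}$ whose $H$-action matches the prescribed semidirect-product structure. Class field theory produces a power-saving number of such $F$-extensions when ordered by the conductor over $L_{0}$, and a local computation comparing ramification of $N/k$ with the cycle type of the inertia in its action on $G/G_{1}$ (for $G_{1}$ the stabilizer of a letter) shows that the discriminant norm $\mathcal{N}_{k/\QQ}(d_{K/k})$ of the degree-$|F|$ subfield grows as the $\mathrm{ind}(g)$-th power of the conductor. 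This yields the required $\gg X^{a(G,d)}$ extensions.

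The upper bound is where the main difficulty lies. The approach is a tower argument: every $G$-extension $N/k$ sits in a tower $k\subset L\subset N$ with $L=N^{F}$ a $G/F$-extension of $k$. I would first bound the number of such base extensions $L$ by induction on $|G|$ (or by invoking previously known cases of Malle's conjecture for $H$), and for each $L$ count the $F$-extensions $N/L$ of prescribed $H$-action whose translated discriminant over $k$ is at most $X$. Via the conductor–discriminant formula for abelian extensions, this reduces to counting characters of ray class groups of $L$ with bounded conductor, so the relevant quantities are the $\ell$-ranks of $\mathrm{Cl}(L)$ for primes $\ell\mid|F|$. The hard part is the uniformity of this count as $L$ varies: unconditional bounds on $\ell$-torsion of class groups are too weak to yield the exponent $a(G,d)+\epsilon$, which is precisely why the paper states its matching upper bound conditional on the $\ell$-torsion conjecture. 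A further subtlety, which must be handled carefully so no power of $X$ is lost, is the translation of local discriminant exponents of $N/k$ into conductor exponents of $N/L$, since the two are related by factors involving the ramification in $L/k$ that have to be tracked separately for each rational prime dividing $|G|$.
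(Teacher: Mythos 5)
This statement is a \emph{conjecture}, not a theorem of the paper: the paper does not prove Malle's weak conjecture and in fact no proof exists in the literature. The paper's contribution is a collection of \emph{partial} upper bounds of the shape $N_d(k,G;X)\ll X^{A(G,d)+\epsilon}$ (Theorem \ref{main-2}), where $A(G,d)$ coincides with the conjectured exponent $a(G,d)$ only in special cases, or else under additional hypotheses --- the $\ell$-torsion conjecture and Malle's conjecture for the base group $H$ (Corollary \ref{main-1}). You are aware of this (you flag the $\ell$-torsion input), but what you have written is consequently a sketch of an approach, not a proof, and it should be evaluated as such.

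Concerning the upper-bound half of your sketch: this does track the paper's actual argument fairly closely. The tower $k\subset L\subset N$ with $L=N^F$ is exactly the tower $k\subset M\subset N$ of Diagram \ref{d1}; the reduction to counting abelian $F$-extensions $N/M$ with bounded conductor via the ray class group, and the appearance of $|\mathrm{Cl}_M[m]|$, are the content of Lemma \ref{cftlem}; and the ``translation of local discriminant exponents'' that you warn about is the step controlled by Lemma \ref{powerlem}, where the paper shows that tamely ramified rational primes not dividing $d_{M/\QQ}$ must appear to an exponent $\geq R$ in $\nm_{M/\QQ}(d_{N/M})$. What you omit is the quantitative engine: the paper gets $X^{1/R}$ out of this power-divisibility constraint and then feeds the conductor--discriminant relation $d_{N/k}=d_{M/k}^{m}\nm_{M/k}(d_{N/M})$ (or the Brauer relation \eqref{K/k-rel} when $G$ is Frobenius) into an Abel-summation over $M$. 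Even with all of this, the paper's unconditional exponent is $A(G,d)\geq a(G,d)$ with equality only in favorable situations, so this route does not prove the conjectured upper bound.

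Concerning the lower bound $X^{a(G,d)}\ll N_d(k,G;X)$: the paper does not address it at all (it cites Alberts \cite{BA2} for lower bounds and otherwise leaves this direction aside). Your sketch here has a genuine gap. You assert that class field theory over a fixed $L_0$ produces $\gg X^{a(G,d)}$ abelian $F$-extensions $N/L_0$ whose Galois closure over $k$ has group exactly $G$ and whose discriminants realize the element of minimal index. This is precisely the hard point. One has to (i) guarantee that the $H$-action on $\mathrm{Gal}(N/L_0)$ is the prescribed one (so that $\mathrm{Gal}(\hat N/k)\cong G$ rather than a proper subgroup or a different extension of $H$ by $F$), (ii) ensure that the inertia at the ramified primes acts with the cycle type of the minimal-index element $g$, which is a nontrivial local-global constraint, and (iii) control the correspondence between the conductor of $N/L_0$ and the relative discriminant norm of the degree-$d$ subfield to within constants. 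Steps (i) and (ii) are where known proofs of lower bounds (e.g.\ for abelian $G$ or for specific solvable $G$) require substantial work, and for general $G$ with abelian normal $F$ they are open. As written, your sketch assumes the conclusion at exactly the place where the difficulty lives.

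So the verdict: you have correctly identified the paper's strategy for the upper-bound direction (tower, class field theory, $\ell$-torsion, discriminant bookkeeping) and correctly identified that it is conditional. But neither half of your proposal is a proof, and the statement itself is explicitly a conjecture in the paper, with no proof to compare against; the paper's own results are Theorem \ref{main-2} and Corollary \ref{main-1}, which are strictly weaker than the conjecture.
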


Throughout this work, we say $f(X)\ll g(X)$ when there exist positive constants $C$ and $N$ such that for all $X>N$, we have 
$|f(X)|\leq C|g(X)|.$ We say $f(X)=O(g(X))$ if and only if $f(X)\ll g(X)$.

The purpose of this work is to establish upper bounds for $N_d(k,G;X)$ for certain pairs $(G,d).$ We study the case that $G$ is a finite non-trivial subgroup of $S_d$ as above, and impose another condition on $G$: $G$ must satisfy
 \eqal\label{e-seq}
 0\rightarrow F\rightarrow G\rightarrow H\rightarrow 0
 \eeqal
where $F$ is non-trivial and abelian. With $G$ described as above, we develop the work of Kl{\"u}ners \cite{JK1} and Ellenberg and Venkatesh \cite{EV1} to obtain upper bounds for $N_{|G|}(k,G;X).$ Moreover, if $G$ is a Frobenius group with an abelian Frobenius kernel $F,$ we make use of a Brauer relation to obtain upper bounds for $N_{|F|}(k,G;X).$

\begin{defn}
Let $\mathcal{F}_1$ be the set of groups $\{1\}\neq G\leq S_d$ that act transitively on $[d]$ and satisfy \eqref{e-seq}. Let $F$ and $H$ be defined as in \eqref{e-seq}. We define $\mathcal{F}$ to be the subset of $\mathcal{F}_1$ that contains all groups $G$ that are Frobenius, with Frobenius kernel $F.$ $G$ is said to be Frobenius if for all $g\in G\setminus H$, $H\cap H^g=\{1\}$ where $H^g:=\{ghg^{-1}: h\in H\}.$

\end{defn}

\begin{exmp}
For odd $m$, $\mathcal{F}$ contains dihedral groups $D_{m}$. Let $C_n$ denote the cyclic group of order $n$. $\mathcal{F}$ also contains groups of the form $C_{\ell}\rtimes C_{\ell-1}$, where $\ell$ is an odd prime. Groups such as $A_4\times C_2$ are contained in $\mathcal{F}_1\setminus \mathcal{F}.$
\end{exmp}
Before stating the main result, we give an example of what the expected order of magnitude for $N_d(k,G;X)$ is under Malle's conjecture.

\begin{exmp}
Let $G=D_{\ell}=\{r,s | r^{\ell}=s^2=(sr)^2=1\}$ be the dihedral group of size $2\ell$, with $\ell$ an odd prime.
Let $G$ act on $[\ell]=\{1,\dots,\ell\}$ by the induced left multiplication action of $S_{\ell}$ acting on $[\ell]$. In particular, 

\[s=(1)(\ell\;\;  2)(\ell-1\;\;3)(\ell-2\;\;4)\dots\left(\frac{\ell+3}{2}\;\; \frac{\ell+1}{2}\right)\qquad \qquad r=(1\; 2\; \dots\; \ell).\]
The rotation $r$ has one orbit, therefore $\mbox{ind}(r)=\ell-1$. 
The reflection $s$ has one orbit of length one (the fixed point) and $(\ell-1)/2$ orbits of length 2, implying there are 
$(\ell+1)/2$ orbits in total. This implies that elements of order 2, have the smallest index, $\text{ind}(s)=(\ell-1)/2.$ Thus $a(D_{\ell},\ell)=2/(\ell-1)$. By a similar argument (provided explicitly in Section 2) one can conclude that $a(D_{\ell},2\ell)=1/\ell.$
\end{exmp}

\begin{notn}\label{H-int}
Throughout this work $\epsilon$ will be an arbitrary (small) positive constant.
With the notation in Definition \eqref{e-seq}, all groups 
in $\mathcal{F}_1$ are finite, with $|F|=m$ and $|H|=t$. Let $p$ and $p_1$ denote the smallest prime divisors of $m$ and $t$ respectively. 
Let $M/k$ be a Galois extension with Galois group $H$. Let
 $\mbox {Cl}_M[m]$ be the $m$-torsion elements of the ideal class group of $M$.
 Let $\mathcal{D}(H,m)=\mathcal{D}$ be any known constant such that 
$|\mbox{Cl}_M[m]|\ll d_{M/\QQ}^{\mathcal{D}}.$ Let $a_1(G,d)$ denote a known constant such that \[N_d(k,G;X)\ll X^{a_1(G,d)+\epsilon}.\] Let $a(G,d)$ denote the constant defined in Definition \ref{agd}.
Corresponding to this notation, we have the following field diagram.
\begin{equation}\label{d1}
  \begin{tikzcd}
    &N   &  \\ 
       &   &  \\
    K=\mbox{Fix}(H)\arrow{uur} {|H|=t}&  &   \\
           &   &M =\mbox{Fix}(F)  \arrow{uuul}{|F|}  \\
       &   &  \\
& k\arrow{uuul}{|F|=m} \arrow[swap]{uur}{|H|}\arrow[swap]{uuuuu}{|G|}  &  
  \end{tikzcd}
\end{equation}
\end{notn}

The main result of this paper is the following:
 \begin{thm}\label{main-2}
With notation as above, we have
 \[N_d(k,G;X)\ll X^{A(G,d)+\epsilon}\]
where $A(G,d)$, $d$, and $G$ are given by: 
 
 \begin{center}
\begin{tabular}{|l|l|l|}
\hline
$G$ &  $d$ & $A(G,d)$\\
\hline
$G\in \mathcal{F}$ & $m$ & $\max\left(\frac{(\mathcal{D}+a_1(H,t))\times t}{m-1},\frac{p}{m(p-1)}\right)$ \\
$G\in\mathcal{F}_1$ & $mt$ & $\max\left(\frac{a_1(H,t)+\mathcal{D}}{m},\frac{p}{mt(p-1)}\right)$\\
\hline
\end{tabular}\end{center}
Here, \[\mathcal{D}=\min_{\substack{p\\ p|m}}(\mathcal{D}(k,H,p)).\] 

\end{thm}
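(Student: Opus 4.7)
\emph{Parametrization.} For each $K/k$ (Case~1, $d=m$) or $N/k$ (Case~2, $d=mt$) contributing to $N_d(k,G;X)$, let $N=\hat K$ be the Galois closure and set $M=N^F$. Then $M/k$ is Galois with group $H$ and $N/M$ is abelian Galois with group $F$, so
\[
N_d(k,G;X)\;\leq\;\sum_{M/k:\,\mathrm{Gal}(M/k)\cong H}\#\bigl\{F\text{-abelian }N/M\text{ with }\mathrm{Gal}(N/k)\cong G\text{ meeting the discriminant bound}\bigr\}.
\]
I would handle the outer sum by the assumed estimate $N_t(k,H;Y)\ll Y^{a_1(H,t)+\epsilon}$ and the inner count by class field theory over $M$, together with the uniform estimate $|\mathrm{Cl}_M[m]|\ll d_{M/\QQ}^{\mathcal{D}}\ll \mathcal{N}_{k/\QQ}(d_{M/k})^{\mathcal{D}}$ (for $k$ fixed).

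\emph{Discriminant relations.} In Case~2 I would invoke the tower formula
\[
\mathcal{N}_{k/\QQ}(d_{N/k})=\mathcal{N}_{M/\QQ}(d_{N/M})\cdot \mathcal{N}_{k/\QQ}(d_{M/k})^{m},
\]
so $\mathcal{N}_{M/\QQ}(d_{N/M})\leq X/\mathcal{N}_{k/\QQ}(d_{M/k})^{m}$ and $\mathcal{N}_{k/\QQ}(d_{M/k})\leq X^{1/m}$. In Case~1, the Frobenius property of $G$ (every non-identity element lies in $F$ or in exactly one conjugate of $H$) yields the Brauer character relation
\[
\pi_{G/F}+t\,\pi_{G/H}\;=\;\pi_{G/\{1\}}+t\,\pi_{G/G},
\]
which via the Artin formalism gives the zeta-function identity $\zeta_M\zeta_K^{t}=\zeta_N\zeta_k^{t}$ and hence, comparing conductors,
\[
\mathcal{N}_{k/\QQ}(d_{K/k})^{t}\;=\;\mathcal{N}_{M/\QQ}(d_{N/M})\cdot \mathcal{N}_{k/\QQ}(d_{M/k})^{m-1}.
\]
Thus $\mathcal{N}_{M/\QQ}(d_{N/M})\leq X^{t}/\mathcal{N}_{k/\QQ}(d_{M/k})^{m-1}$ and $\mathcal{N}_{k/\QQ}(d_{M/k})\leq X^{t/(m-1)}$.

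\emph{Inner count and combining.} A $G$-equivariant abelian $F$-extension $N/M$ corresponds to an $H$-equivariant surjection $\mathrm{Gal}(\overline M/M)^{\mathrm{ab}}\twoheadrightarrow F$, whose conductor must be $H$-invariant and is therefore extended from an ideal of $\mathcal{O}_k$. This reduction from $M$-ideals to $k$-ideals saves a factor of $t$ in the conductor-to-discriminant exponent; together with the Ellenberg--Venkatesh ray-class argument applied character-by-character via the conductor--discriminant formula, it yields
\[
\#\{F\text{-abelian }N/M:\mathcal{N}_{M/\QQ}(d_{N/M})\leq Y\}\;\ll\;|\mathrm{Cl}_M[m]|\cdot Y^{\,p/(mt(p-1))+\epsilon}.
\]
Substituting this together with the discriminant relations into the outer sum and decomposing $\mathcal{N}_{k/\QQ}(d_{M/k})$ dyadically up to $X^{1/m}$ (Case~2) or $X^{t/(m-1)}$ (Case~1), a short calculation shows that the total is dominated either at the large-$M$ end (producing the trade-off terms $(\mathcal{D}+a_1(H,t))/m$ and $(\mathcal{D}+a_1(H,t))t/(m-1)$ respectively) or at the small-$M$ end (producing the Malle terms $p/(mt(p-1))$ and $p/(m(p-1))$, where the extra factor of $t$ in Case~1 arises because $Y\leq X^{t}$ rather than $Y\leq X$). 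Taking the maximum of the two regimes yields $A(G,d)$.

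\emph{Main obstacle.} The hard part is the inner class-field-theoretic count at the refined exponent $p/(mt(p-1))$, which is Malle's prediction for $G$ acting regularly. Attaining this rather than the naive Wright exponent $a(F,m)=p/(m(p-1))$ depends on the $H$-equivariance observation that shrinks the conductor summation by a factor $t$. Secondary difficulties are the uniform (in $M$) control of the $m$-torsion of ray class groups of $M$ by $|\mathrm{Cl}_M[m]|$, and---for non-cyclic abelian $F$---a careful character-by-character analysis in the conductor--discriminant formula so as not to lose a factor $|F|/p$ in the exponent.
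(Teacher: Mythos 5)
Your proposal is correct and follows the same outline as the paper's proof: fix the base field, parametrize by the intermediate field $M=\mathrm{Fix}(F)$, apply the two discriminant relations ($d_{N/k}=d_{M/k}^m\nm_{M/k}(d_{N/M})$ for $d=mt$, and the Frobenius relation $\nm_{k/\QQ}(d_{K/k})^t=\nm_{k/\QQ}(d_{M/k})^{m-1}\nm_{M/\QQ}(d_{N/M})$ for $d=m$, which the paper quotes as Theorem~\ref{th-kl-fekir} from Kl\"uners--Fieker and you re-derive from the Brauer relation), bound the inner count of abelian $F$-extensions $N/M$ via class field theory together with the Galois structure of $N/k$, and finish by partial summation over $M$ using the assumed bound $N_t(k,H;Y)\ll Y^{a_1(H,t)+\epsilon}$. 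The one place you diverge in flavor is the key inner-count exponent $p/(mt(p-1))$: you frame it through $H$-invariance of the conductor and a character-by-character conductor--discriminant analysis, and list the non-cyclic-$F$ case as a ``secondary difficulty.'' The paper's Lemma~\ref{powerlem} sidesteps this worry by computing the discriminant valuation directly from ramification: if $\ell\nmid d_{M/\QQ}[N:\QQ]$ and $N/k$ is Galois, every prime of $N$ over $\ell$ has the same (tame) ramification index $e\geq p$ with $e\mid m$, whence $\nu_\ell(\nm_{M/\QQ}(d_{N/M}))=|G|(1-1/e)\geq |G|(1-1/p)$, uniformly in the isomorphism type of $F$; so there is no factor $|F|/p$ to lose. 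One small imprecision worth noting: an $H$-invariant ideal of $\mathcal{O}_M$ is extended from $\mathcal{O}_k$ only at primes unramified in $M/k$, so the primes dividing $d_{M/k}$ must be split off and treated separately, exactly as the paper does with the sets $U_{N/M}$ and $V_{N/M}$ in Lemma~\ref{powerlem}; this contributes only $C^{\omega(\nm(d_{M/k}))}\ll X^\epsilon$ and so does not affect the exponent.
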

If we are able to attain better upper bounds for $\mathcal{D}$ then $A(G,d)$ may reduce, implying a tighter upper bound. It is believed that 
$\mathcal{D}$ can be arbitrarily small.
\begin{conj}{($\ell$-torsion conjecture)}
Let $K/\QQ$ be a number field of degree $n$. Then for every $\ell\in\NN,$
$
|\mbox{Cl}_K[\ell]|\ll_{n,\ell,\epsilon}d_{K/\QQ}^{\epsilon}.
$
\end{conj}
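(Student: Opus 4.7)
The $\ell$-torsion conjecture is a well-known open problem, so an honest proposal must make clear that no complete proof is available and lay out the state of the art. The trivial starting point is the analytic class number formula (or Brauer-Siegel), which yields $|\Cl_K|\ll_{n,\epsilon} d_{K/\QQ}^{1/2+\epsilon}$ and hence the same bound for $|\Cl_K[\ell]|$; any proposal must explain how to improve the exponent $1/2$ all the way to $\epsilon$. Currently nothing close to this is known in full generality; there are partial results of Ellenberg--Venkatesh (an unconditional power saving for $\ell=3$ and the cubic case, and a small power saving under GRH in general), of Heath-Brown--Pierce and Ellenberg--Pierce--Wood (power savings on average in families), and essentially optimal bounds only in very special ranges (e.g.\ genus theory for $\ell=2$).

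The plan I would follow is the Ellenberg--Venkatesh strategy, which is the only general-purpose attack that breaks the trivial bound. First, observe that every class $[\mathfrak{a}]\in \Cl_K[\ell]$ has a representative $\mathfrak{a}$ of norm bounded by the Minkowski constant, so $\mathfrak{a}^\ell=(\alpha)$ with $|\nm_{K/\QQ}(\alpha)|\ll d_{K/\QQ}^{\ell/2}$. Second, cover a fundamental domain for $\sO_K^\times$ acting on $K\otimes\RR$ by boxes on which $|\nm|$ is approximately constant, so that counting $\ell$-torsion classes reduces to counting short algebraic integers of controlled norm in such a box. Third, apply a geometry-of-numbers / point-counting estimate: if $\sO_K$ admits many small elements, then each $\mathfrak{a}^\ell$ admits many small generators, and one obtains a saving over the trivial bound proportional to the number of independent short vectors one can produce.

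The hard part, and the reason the conjecture remains open, is step three: one needs unconditional lower bounds on the number of small algebraic integers of $K$ (equivalently, upper bounds on the successive Minkowski minima of $\sO_K$ beyond the first two, or an abundance of split primes of small norm in $K$). Under GRH the latter is accessible by effective Chebotarev, which gives the Ellenberg--Venkatesh saving; unconditionally there is no known way to produce enough short generators uniformly in $K$. Thus the proposal is honest but incomplete: steps one and two are routine and executable, while step three requires genuinely new input -- either a breakthrough on short vectors in $\sO_K$, effective Chebotarev without GRH, or a new sieve/subconvexity-type argument in the spirit of Bhargava--Taniguchi--Thorne. Absent such an input, the best one can do is assemble partial bounds (by signature, by Galois group, on average) which, as emphasized in Notation \ref{H-int}, directly sharpen the constant $\mathcal{D}$ and hence the exponent $A(G,d)$ in Theorem \ref{main-2}.
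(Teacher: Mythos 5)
This statement is the $\ell$-torsion conjecture, which the paper states as an open conjecture and never proves: it is used only as a hypothesis (in Corollary \ref{main-1} and in the conditional entries of Example \ref{exmp1}), with Duke, Zhang, and Brumer--Silverman cited merely as motivation, so there is no proof in the paper to compare yours against. Your proposal is therefore handled correctly: you do not claim a proof, and your account of the state of the art --- the trivial bound $|\Cl_K[\ell]|\le|\Cl_K|\ll_{n,\epsilon}d_{K/\QQ}^{1/2+\epsilon}$, the Ellenberg--Venkatesh strategy with its GRH-conditional power saving and the unconditional $3$-torsion cases, the averaged results in families, and genus theory for $\ell=2$ --- is accurate, and you correctly identify the production of enough short vectors (or effective Chebotarev without GRH) as the genuine missing ingredient.
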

The impetus for this conjecture may be found in Duke \cite{D98}, Zhang \cite{Z05} and Brumer and Silverman \cite{BSS96}.

\begin{cor}\label{main-1}
Let $G$ be a group in $\mathcal{F}_1.$ If we assume the following conditions:

\begin{itemize}
    \item The $\ell$-torsion conjecture holds, in particular, we need that $|\mbox{Cl}_K[\ell]|\ll_{K,\ell,\epsilon}d_{K/\QQ}^{\epsilon}$ for any prime divisor $\ell$ of $[N:M].$
    
    \item $N_{|H|}(k,H;X)$ satisfies Malle's conjecture.
\end{itemize}

then the following hold true
\begin{enumerate}
    \item For any $G$ in $\mathcal{F}_1$, $N_{|G|}(k,G;X)$ satisfies Malle's conjecture.
    
    \item For any $C_m\rtimes C_t$ in $\mathcal{F}$, $N_{|F|}(k,G;X)$ satisfies Malle's conjecture.
\end{enumerate}

\end{cor}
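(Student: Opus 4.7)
The plan is to substitute the two hypotheses of the corollary into the exponents $A(G,d)$ of Theorem \ref{main-2} and verify that they agree with $a(G,d)$ computed directly from the relevant permutation action, up to an additive $\epsilon$. Under the $\ell$-torsion conjecture, $|\Cl_M[p]|\ll d_{M/\QQ}^{\epsilon}$ for every prime $p\mid m$, so $\mathcal{D}$ may be taken arbitrarily small. Under Malle's weak conjecture for $H$ acting regularly on $t=|H|$ letters, a direct index computation (every prime-order element $h$ has $t/\ord(h)$ orbits of length $\ord(h)$) gives minimum index $t(p_1-1)/p_1$, so $a_1(H,t)=a(H,t)=p_1/(t(p_1-1))$.

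For part (2), $G=C_m\rtimes C_t\in\mathcal{F}$ acts on the $m$ cosets of a complement $H$. Since $F$ is the Frobenius kernel, it acts fixed-point-freely, hence regularly, on $[m]$, so an element of $F$ of prime order $p$ has index $m(p-1)/p$. By the Frobenius property, any nontrivial element of a conjugate of $H$ fixes exactly one coset and acts freely on the remaining $m-1$, yielding index $(m-1)(p_1-1)/p_1$ for order $p_1$. Hence
\[
a(G,m)=\max\!\left(\tfrac{p}{m(p-1)},\ \tfrac{p_1}{(m-1)(p_1-1)}\right).
\]
Plugging $\mathcal{D}=\epsilon$ and $a_1(H,t)=p_1/(t(p_1-1))$ into the first entry of $A(G,m)$ yields $p_1/((m-1)(p_1-1))+O(\epsilon)$, so $A(G,m)=a(G,m)+O(\epsilon)$. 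For part (1), $G\in\mathcal{F}_1$ acts regularly on $[mt]$; an element of prime order $q$ has $mt/q$ orbits and index $mt(q-1)/q$, so $a(G,mt)=P/(mt(P-1))$ for $P$ the smallest prime dividing $mt$. Monotonicity of $x/(x-1)$ yields $a(G,mt)=\max(p/(mt(p-1)),\ p_1/(mt(p_1-1)))$, which the same substitution into $A(G,mt)$ recovers up to $\epsilon$. Thus Theorem \ref{main-2} produces the Malle upper bound in both cases; the matching lower bounds follow from class field theory constructions of abelian $F$-towers built over the assumed abundance of $H$-extensions $M/k$.

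The main obstacle is ensuring that the ``mixed'' maxima in $A(G,d)$---whose two arguments carry asymmetric denominators ($m-1$ versus $m$ in part 2, $m$ versus $mt$ in part 1) and arise from structurally different estimates in the proof of Theorem \ref{main-2}---collapse cleanly to the uniform Malle expression $\max_{q\mid d}q/(d(q-1))$ coming from the permutation combinatorics. The matching is numerically tight and relies on the numerator $t$ in the first entry of $A(G,m)$ conspiring with the denominator $t(p_1-1)$ of $a_1(H,t)$, together with the monotonicity of $x\mapsto x/(x-1)$ to identify the dominant entry with the smallest prime dividing $d$.
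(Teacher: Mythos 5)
Your upper-bound argument is essentially the paper's: substitute $\mathcal{D}\to\epsilon$ (from the $\ell$-torsion conjecture) and $a_1(H,t)=p_1/(t(p_1-1))$ (from Malle for $H$, which forces the regular action since $|H|=t$) into the two exponents of Theorem \ref{main-2}, then verify that the resulting maxima coincide with $a(G,m)$ and $a(G,mt)$ as computed in equations \eqref{a(G)=m} and \eqref{a(G)=mt}. Your recomputation of those formulas and the monotonicity-of-$x/(x-1)$ step to fold the two-term maximum in part (1) into the single smallest-prime expression are exactly what makes the paper's comparison work, so this is the same route.

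One caveat worth naming: Conjecture \ref{Malle} as stated demands both $X^{a(G,d)}\ll N_d(k,G;X)$ and $N_d(k,G;X)\ll X^{a(G,d)+\epsilon}$, yet Theorem \ref{main-2} and the argument above deliver only the upper bound. Your closing sentence asserting that the matching lower bound ``follows from class field theory constructions of abelian $F$-towers'' is not a proof — one must show that enough of the constructed $N/M$ have full Galois group $G$ over $k$ (not a proper quotient), and that the count survives the discriminant-ordering; this is precisely the nontrivial content of Alberts' lower-bound paper \cite{BA2}. The paper's own proof also leaves the lower bound untouched, so either the corollary is implicitly about the upper bound only, or both your proposal and the paper share this gap.
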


\begin{exmp}\label{exmp1}
With the same notation as in Theorem \ref{main-2} above we have the following results:
\begin{center}
\begin{tabular}{|l|l|l|l|l|}
\hline
$G$ & Conditions & $d$ & $A(G,d)$ & $a(G,d)$\\
\hline
$C_{\ell}\rtimes C_{\ell-1}$& $\ell$ is an odd prime & $\ell$ & $\frac{1}{2}+\frac{2}{\ell-1}$&${2}/{\ell-1}$\\
$C_{\ell}\rtimes C_{\ell-1}$&  $\ell$ is an odd prime & $\ell^2-\ell$ & $\frac{1}{2\ell}+\frac{2}{\ell(\ell-1)}$&${2}/{(\ell(\ell-1))}$\\
$A_4$&  & $4$ & $0.7783$ & 1/2\\
$C_2^3\rtimes (C_7\rtimes C_3)$ & & $168$ & $9/112$& 1/84\\
$C_2^3\rtimes (C_7\rtimes C_3)$ & $\ell$-torsion conjecture & $168$ & $1/84$ & 1/84\\
$C_{2}^3\rtimes C_{7}$& $k=\QQ$  & $8$ & $0.595$ & 1/4\\
$C_{103}\rtimes C_{17}$ & $k=\QQ$ & 103 &0.0104 & 0.09369\\

$C_3^2\rtimes C_4$ & &$6$ & $1/2$&1/2\\
$A_4$ & &$6$ & $1/2$&1/2\\

\hline
\end{tabular}\end{center}
\end{exmp}

The first 3 examples are direct consequences of Theorem \ref{main-2}, the next two examples are shown by applying Theorem \ref{main-2} twice. The last four examples are extensions of the method used to prove Theorem \ref{main-2}. The last two examples indicate that the method can be used to obtain upper bounds for $N_d(k,G;X)$ for $d\neq mt$ in certain cases. In fact, the last examples implies that, unconditionally, we have $N_6(k,A_4;X)\ll X^{1/2+\epsilon}$ and $N_6(k,C_3^2\rtimes C_4;X)\ll X^{1/2+\epsilon}$ which are exactly as Malle's conjecture predicts. Details regarding these extensions may be found in Section \ref{exts}.

We outline the main ideas behind obtaining an upper bound for $N_{mt}(k,G;X)$ for $G\in\mathcal{F}_1$. Fix the notation as in Diagram \ref{d1}.

\begin{itemize}

\item Fix a base field $k$ and a degree $t$ extension $M/k$ with Galois group $H.$
The idea is to exploit the discriminant relation $d_{N/k}=d_{M/k}^{m}N_{M/k}(d_{N/M}).$

\item  We count the number of abelian extensions $N/M$ of degree $m$ with the discriminant $d_{N/M}$ having a certain fixed support. To count $N_{mt}(k,G;X)$, we sum over all $M/k$ and the corresponding possible supports of $d_{N/M}.$ Precisely, $N_{mt}(k,G;X)$ is bounded above by
\[|\{M/k \text{ with } \nm_{k/\QQ}(d_{M/k})\leq X^{1/m}\}|\times\Big|\Big\{\text{abelian extensions  }  N/M \text{ of degree }m, \text{ with }\nm_{M/\QQ}(d_{N/M})\leq \frac{X}{\nm_{k/\QQ}(d_{M/k})^{m}}\Big\}\Big|.\]

\item Show that the number of distinct integer values $\nm_{M/\QQ}(d_{N/M})\leq X$ above can take is $O(X^{1/R(G)})$ for some function $R$ that depends only on $G$. We can do this by finding the power of the primes $\ell\in\ZZ$ that ramify tamely in $N$ and $\ell\nmid \nm_{k/\QQ}(d_{M/k})$. This is made precise in Lemma \ref{powerlem}.

\end{itemize}

Obtaining upper bounds for $N_m(k,G;X)$ for groups in $\mathcal{F}$ is done in the same manner as above, with a different discriminant relation, one stemming from a Brauer relation. 

 \subsection{Known results}
Malle made his conjecture in 2002, though the problem of counting number fields ordered by an invariant has been investigated earlier. A conjecture in this direction, generally attributed to Linnik, states that if we fix a number field $k$ the number of degree $d$ extensions $K/k$ with $\nm_{k/\QQ}(d_{K/k})\leq X$ for $X\rightarrow\infty$ is $O(X).$ We denote the statement of Linnik's conjecture as
 \eqal\label{basic}
 N_d(k;X)=O_{d,k}(X).\eeqal
 The conjecture holds for $d\leq 5.$. The cases of $d=1$ and $N_2(\QQ;X)$ are trivial and do agree with the conjecture. The order of magnitude for $N_2(k;X)$ and $N_3(k;X)$ for general $k$ were
 obtained by Datskovsky and Wright \cite{DD88} who show that 
\[N_3(k;X)= C_kX +o(X)\]
 for an explicit constant $C_k$ that depends on $k.$ 
Manjul Bhargava showed that $N_4(\QQ;X)\sim c_4X$ and $N_5(\QQ;X)\sim  c_5X$ in \cite{MB05} and \cite{MB10}. For arbitrary $k$ and permutation groups $S_d$,  Bhargava, Shankar and Wang \cite{bsw} compute explicit constants $c_d$ in  $N_d(k,S_d;X)=c_dX+o(X)$ for $d=2,3,4$ and $5.$ 
Ellenberg and Venkatesh \cite{EV1} establish upper bounds for all $d>3$,  
precisely, they show for a positive constant $C$,
\[N_d(k;X)\ll X^{\mbox{exp}(C\sqrt{\log{d}})}.\]
The upper bounds that are conjectured by Malle are never greater than  one, thereby making it a stronger conjecture than \eqref{basic}. 
For abelian groups, Wright \cite{DW1} established the order of magnitude of $N_{|G|}(k,G;X)$, showing that it does satisfy Malle's conjecture. For any group $G$ with an $|G|>4$, Ellenberg and Venkatesh \cite{EV1} show that $N_{|G|}(k,G;x)\ll X^{3/8+\epsilon}$. The result in Theorem \ref{main-2} match the results in \cite{EV1} and only do better when there is non trivial information about the size of the torsion of the class group available to use. The method of proof for bounding $N_{|G|}(k,G;X)$ for $G\in\mathcal{F}_1$ in \cite{EV1} is almost the same as the method here except they do not make explicit use of the size of the class group.       
Kl{\"u}ners was the first to explicitly study a Frobenius group, $G=D_{\ell}$ where $\ell$ is an odd prime. We generalize his techniques to all groups in $\mathcal{F}.$
Kl{\"u}ners showed the following 
\eqal\label{jk1}
N_{\ell}(k,D_{\ell};X)\ll X^{\frac{3}{\ell-1}+\epsilon}\qquad\qquad N_{2\ell}(k,D_{\ell};X)\ll X^{\frac{3}{2\ell}+\epsilon}.
\eeqal
The first of the above results was improved upon by Cohen and Thorne in \cite{CT16}, Theorem 1.1, in the case that $k=\QQ$. Their methods also imply an improvement in $N_{2\ell}(\QQ,D_{\ell};X)$. Their method applies non-trivial bounds for the average value of $\text{Cl}_{\QQ(\sqrt{D})}[\ell],$ they show
\eqal\label{CT-dl}
N_{\ell}(\QQ,D_{\ell};X)\ll X^{\frac{3}{\ell-1}-\frac{1}{\ell^2-\ell}+\epsilon}.\eeqal
The above result is implied by work of Pierce, Turnage-Butterbaugh and Wood \cite{PTBW}. Pierce, Turnage-Butterbaugh and Wood additionally establish non-trivial upper bounds for the size of the $m$-torsion of the class group for almost all Galois extensions $M/\QQ$ with a wide range of possible Galois groups. Using these bounds for quadratic extensions, Theorem \ref{main-2} subsumes the result of Cohen and Thorne for $N_{\ell}(\QQ,D_{\ell};X)$ and improves on the result of Kl{\"u}ners' for $N_{2\ell}(\QQ,D_{\ell};X)$ by showing
\[N_{2\ell}(\QQ,D_{\ell};X)\ll X^{\frac{3}{2\ell}-\frac{3}{2\ell^2}+\epsilon}.\]
For the Frobenius group $C_5\rtimes C_4,$ we are able to show $N_5(k,C_5\rtimes C_4;X)\ll X^{1+\epsilon}$ which is not as tight as the upper bound of Bhargava, Cojocaru and Thorne, \cite{BCT}
who show 
\eqal\label{bct}
N_{5}(k,C_{5}\rtimes C_4;X)\ll X^{39/40+\epsilon}.\eeqal
Alberts \cite{BA} showed upper bounds for $N_d(k,G;X)$ in the case that $G$ is a solvable group. The set of solvable groups is a subset of the groups in $\mathcal{F}_1.$ Upper bounds for $N_{|G|}(k,G;X)$ for solvable groups that we are able to show in this paper are as tight, or tighter than the upper bounds for the same quantity in Alberts' work. For Frobenius groups $G\in\mathcal{F}$, upper bounds for $N_{|F|}(k,G;X)$ are tighter in this work than they are in \cite{BA}. Alberts' method has other advantages such as it may be used to count number fields ordered by an invariant other than the discriminant, such as the conductor. Alberts is also able to obtain upper bounds for $N_d(k,G;X)$ for all $d\neq |G|$.
Recently, Alberts \cite{BA2} wrote another paper that computed lower bounds for $N_d(k,G;X)$ as well for $G\in\mathcal{F}_1.$



This paper is structured as follows. Section 2 discuss the structure of Frobenius groups and introduces the Brauer relation. Section 3 contains technical lemmas needed to prove the main theorem, and then proves the main theorem. Section 4 is divided into two sections that talk about different ways to generalize the method of the proof. Section 4.1 explores how the method may be extended to obtain upper bounds for $N_d(k,G;X)$ for $d\notin\{m,mt\}$, for instance, we show $N_6(k,A_4;X)\ll X^{1/2+\epsilon}.$ Section 4.2 explores how non-trivial bounds for $|\text{Cl}_M[\ell]|$ for almost all extensions $M/k$  for a certain family of field extensions may be used to improve $N_d(k,G;X)$.
And finally, Section 5 inspects in which situation a stronger bound for $|\text{Cl}_M[\ell]|$ does and does not play a further role in improving upper bounds for $N_d(k,G;X).$

%
%
\section{Frobenius groups}\label{21}
 We introduce Frobenius groups here and compute the expected order of magnitude for $N_m(k,C_m\rtimes C_t;X)$. A Frobenius group $G\leq S_m$ acts transitively on $[m]$ such that no non-trivial element fixes more than one point and some non-trivial element fixes a point.
Frobenius groups have form $G=F\rtimes  H$ where the normal subgroup $F$ is referred to as the Frobenius kernel and $|H|=t$ is a divisor of $m-1.$ We now explore the structure of a certain family of Frobenius groups, those that have the form $G=C_m\rtimes C_t.$ Under Notation \ref{H-int}
we compute $a(G,m)$ and $a(G,mt).$ We represent $G$ as

\[C_m\rtimes C_t=\{\psi,\sigma: \psi^t=\sigma^m=1, \psi\sigma\psi^{-1}=\sigma^v\}\]
where $v$ is any primitive root modulo $m.$ The action of $G$ on a set of $m$ elements is unique. When $G$ acts on the set of $m$ elements, every $\sigma$ has orbits of length $s$ and a total of $m/s$ orbits, where $s$ is some divisor of $m.$ Every $\psi$ has one fixed point in this action, and every other orbit has length $j$ where $j$ is some divisor of $t.$   
Therefore 
\[\mbox{ ind}(\sigma)=m-\frac{m}{s}\qquad \qquad \mbox{ ind}(\psi)=m-\left(1+\frac{m-1}{j}\right).\] This implies 

\eqal\label{a(G)=m}
a(C_m\rtimes C_t,m)=\frac{1}{m-\max\left(\frac{m}{p},1+\frac{m-1}{p_1}\right)}.\\
\eeqal
We now compute $a(G,mt)$. More generally, we show that for any group $G$, if $p$ is the smallest prime divisor of $|G|$, then   
\eqal\label{a(G)=mt}
 a(G,|G|)=\frac{1}{|G|}\frac{p}{p-1}.\\
 \eeqal
 Every group $G$ has an element $g$ of order $p$. Every orbit of $g$ has length $p$ and hence $g$ has $|G|/p$ orbits. No non-identity element can have more orbits since the length of each orbit must be a divisor of $|G|.$ Hence $g$ is the element with most orbits, which shows the claim.
 
The main tool in showing upper bounds for $N_m(k,G;X)$ when $G$ is Frobenius, is a Brauer relation. By a result of Kl{\"u}ners and Fieker (\cite{FK1}, Theorem 4):
\begin{thm}\label{th-kl-fekir}
Fix an algebraic number field $k$. Let $G=F\rtimes H$ be any Frobenius group. Let $N/k$ be a normal extension with
 $\mbox{Gal}(N/k)= G$. Let $K$ be the fixed field of $H$ and $M$ be the fixed field of $F$. Then 

\begin{equation}\label{K/k-rel}
d_{K/k}=d_{M/k}^{(|F|-1)/|H|}\nm_{M/k}(d_{N/M})^{1/|H|}.
\end{equation}

\end{thm}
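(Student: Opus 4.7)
The plan is to derive this identity from a Brauer-type relation among permutation characters of $G$ together with the conductor-discriminant formula. Write $\pi_H = \mathrm{Ind}_H^G 1_H$, $\pi_F = \mathrm{Ind}_F^G 1_F$, and $\mathrm{reg}_G = \mathrm{Ind}_{\{1\}}^G 1$, viewed as characters of $G$. The first step will be to establish the virtual character identity
\[
|H| \cdot \pi_H + \pi_F = \mathrm{reg}_G + |H| \cdot 1_G
\]
in the character ring of $G$.

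To prove this, I would evaluate both sides pointwise on the three regions dictated by the Frobenius structure: $\{1\}$, $F \setminus \{1\}$, and $G \setminus F$. Because $F \trianglelefteq G$, the character $\pi_F$ equals $|H|$ on $F$ and $0$ outside. For $\pi_H$, the defining Frobenius condition $H \cap H^g = \{1\}$ for $g \notin H$, together with $F \cap H = \{1\}$, forces $\pi_H$ to vanish on $F \setminus \{1\}$; and since every element of $G \setminus F$ lies in exactly one conjugate of $H$, a short count yields $\pi_H(g) = 1$ there, with $\pi_H(1) = |F|$. Comparing the two sides on each of the three regions confirms the identity.

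Next I would apply the Artin conductor $\mathfrak{f}$, which is additive on virtual characters (hence multiplicative once we exponentiate to conductor ideals). The conductor-discriminant formula gives $\mathfrak{f}(\pi_H) = d_{K/k}$ and $\mathfrak{f}(\pi_F) = d_{M/k}$, while transitivity of induction combined with the tower formula for discriminants yields $\mathfrak{f}(\mathrm{reg}_G) = d_{N/k} = d_{M/k}^{|F|} \cdot \nm_{M/k}(d_{N/M})$. Since $\mathfrak{f}(1_G) = (1)$, substituting into the exponentiated character identity produces
\[
d_{K/k}^{|H|} \cdot d_{M/k} = d_{M/k}^{|F|} \cdot \nm_{M/k}(d_{N/M}),
\]
and extracting $|H|$-th roots—legitimate because $|H|$ divides $|F|-1$ in any Frobenius group—recovers the claimed formula.

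The point requiring the most care is the pointwise verification of the character identity: computing $\pi_H$ on $F \setminus \{1\}$ and on $G \setminus F$ is where the Frobenius hypothesis is really used, via the disjointness $xHx^{-1} \cap yHy^{-1} = \{1\}$ for distinct cosets $xH \neq yH$. Everything downstream is formal Artin-conductor bookkeeping.
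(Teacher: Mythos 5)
Your proof is correct. The paper does not supply its own proof of this theorem --- it cites Fieker and Kl\"uners (Theorem~4 of the referenced paper) --- but your route via the Brauer relation $|H|\,\pi_H + \pi_F = \mathrm{reg}_G + |H|\cdot 1_G$ followed by the conductor--discriminant formula is the standard derivation and matches the paper's own framing of ``a Brauer relation'' as the key tool for Frobenius groups. The pointwise verification on $\{1\}$, $F\setminus\{1\}$, and $G\setminus F$ is right (it uses exactly that $F$ is the complement of $\bigcup_g (H^g\setminus\{1\})$ and that distinct conjugates of $H$ meet only in the identity), and the conductor bookkeeping then gives $d_{K/k}^{|H|}\,d_{M/k}=d_{N/k}=d_{M/k}^{|F|}\nm_{M/k}(d_{N/M})$, from which the claim follows. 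One small remark on your closing sentence: the $|H|$-th root extraction is legitimate because fractional ideals of $k$ form a free abelian group, so $|H|$-th roots are unique whenever they exist; the divisibility $|H|\mid |F|-1$ merely ensures the exponent on $d_{M/k}$ is an integer, while the fact that $\nm_{M/k}(d_{N/M})^{1/|H|}$ is a genuine integral ideal is a \emph{consequence} of the identity $d_{K/k}^{|H|}=d_{M/k}^{|F|-1}\nm_{M/k}(d_{N/M})$, not a precondition for extracting the root.
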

Observe that $F$ need not be abelian.

%
%
\section{Upper bounds}
This section is divided into two subsections, the first establishes the technical tools needed to prove Theorem \ref{main-2}, and  the second subsection proves it. A broad overview of the procedure can be found in the introduction. Throughout this section, we use the notation established in Notation \ref{H-int} and Diagram \ref{d1}.

\subsection{Technical results}

%
%

We now make more precise the definition of $\mathcal{D}$ that was defined in Notation \ref{H-int}. Let $M/k$ be a Galois extension with Galois group $H$. Then
\eqal\label{H-def}\mathcal{D}=
\mathcal{D}(k,H,m):=\limsup_{d_{M/\QQ}}\frac{\log(|\mbox{Cl}_M[m]|)}{\log(|d_{M/\QQ})|}.\\
\eeqal

\begin{lem}\label{cftlem}

Let $M/k$ be a finite extension of degree $t.$ Let $P$ be a finite set of primes in $\sO_M.$ The number of abelian extensions $N/M$ of degree $m$ which are at most ramified in $P$ is bounded above by
\[C^{|P|}O_{k}\left(\nm_{k/\QQ}(d_{M/k})^{\mathcal{D}}\right).\]
\end{lem}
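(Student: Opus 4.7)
The plan is to use class field theory to identify abelian degree-$m$ extensions $N/M$ ramified only in $P$ with quotients of order $m$ of a suitable ray class group of $M$, and then to bound the number of such quotients using the hypothesis encoded in $\mathcal{D}$. First I would fix a modulus $\mathfrak{m} = \prod_{\mathfrak{p}\in P}\mathfrak{p}^{e_\mathfrak{p}}$ (adjoined to all archimedean places if necessary) with each exponent $e_\mathfrak{p}$ chosen large enough to accommodate the conductor of any abelian extension of $M_\mathfrak{p}$ of exponent dividing $m$. Local class field theory shows such $e_\mathfrak{p}$ may be taken bounded purely in terms of $m$ and $[M_\mathfrak{p}:\QQ_\ell]$, hence independently of $\mathfrak{p}$, because $[M:\QQ]=t[k:\QQ]$ is fixed. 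Global class field theory then identifies every $N/M$ we want to count with a surjection from $\mbox{Cl}_M^{\mathfrak{m}}$ onto an abelian group of order $m$.

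Next, each such surjection factors through the $m$-cotorsion quotient $\mbox{Cl}_M^{\mathfrak{m}}/m\mbox{Cl}_M^{\mathfrak{m}}$, which is a finite abelian group killed by $m$ of order $|\mbox{Cl}_M^{\mathfrak{m}}[m]|$ (by the structure theorem), so the total number of such surjections is $O_m(|\mbox{Cl}_M^{\mathfrak{m}}[m]|)$. To estimate $|\mbox{Cl}_M^{\mathfrak{m}}[m]|$, I would feed the standard exact sequence
\[\sO_M^{*}\longrightarrow(\sO_M/\mathfrak{m})^{*}\longrightarrow \mbox{Cl}_M^{\mathfrak{m}}\longrightarrow \mbox{Cl}_M\longrightarrow 0\]
through the snake lemma to obtain
\[|\mbox{Cl}_M^{\mathfrak{m}}[m]|\;\ll_{k,t,m}\;|(\sO_M/\mathfrak{m})^{*}[m]|\cdot|\mbox{Cl}_M[m]|,\]
where the implied constant absorbs $|\sO_M^{*}/(\sO_M^{*})^{m}|$, which is bounded as soon as $[M:\QQ]$ (and hence the unit rank) is fixed. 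Each local factor $(\sO_M/\mathfrak{p}^{e_\mathfrak{p}})^{*}[m]$ is bounded purely in terms of $m$ and $[M_\mathfrak{p}:\QQ_\ell]$: the tame part contributes $\gcd(N(\mathfrak{p})-1,m)\leq m$, and the wild part contributes a bounded factor because the rank of its $m$-torsion is controlled by the local degree. Therefore $|(\sO_M/\mathfrak{m})^{*}[m]|\leq C^{|P|}$ for some constant $C=C(k,t,m)$.

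Finally, the definition \eqref{H-def} gives $|\mbox{Cl}_M[m]|\ll d_{M/\QQ}^{\mathcal{D}}$, and the discriminant tower formula $d_{M/\QQ}=d_{k/\QQ}^{t}\,\nm_{k/\QQ}(d_{M/k})$, combined with the fact that $k$ (and hence $d_{k/\QQ}$ and $t$) is fixed, yields $|\mbox{Cl}_M[m]|\ll_{k}\nm_{k/\QQ}(d_{M/k})^{\mathcal{D}}$. Chaining the estimates together gives the claim.

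The main obstacle is bookkeeping rather than ideas: one must check that the local conductor exponents $e_\mathfrak{p}$ at wildly ramified primes really can be taken uniform in $\mathfrak{p}$, that the bound on $(\sO_M/\mathfrak{p}^{e_\mathfrak{p}})^{*}[m]$ is genuinely independent of $\mathfrak{p}$, and that all the implied constants involving $m$, $t$, the unit rank of $M$, and the factor $d_{k/\QQ}^{t\mathcal{D}}$ collapse into the constant $C$ and the symbol $O_k$ so that the bound comes out exactly in the stated shape $C^{|P|}\cdot O_k(\nm_{k/\QQ}(d_{M/k})^{\mathcal{D}})$. Each step is routine from class field theory, but arranging them so that the counting bound separates cleanly into a local contribution (the $C^{|P|}$) and a global contribution (coming from $\mathcal{D}$) requires care.
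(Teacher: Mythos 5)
Your proposal follows essentially the same route as the paper's proof: identify the extensions with index-$m$ subgroups (equivalently surjections) of a ray class group $\mbox{Cl}_M(\mathfrak{m})$ with $\mathfrak{m}$ supported on $P$, use the exact sequence $\sO_M^{\times}\to(\sO_M/\mathfrak{m})^{\times}\to\mbox{Cl}_M(\mathfrak{m})\to\mbox{Cl}_M\to 1$ to bound $|\mbox{Cl}_M(\mathfrak{m})[m]|$ by $|(\sO_M/\mathfrak{m})^{\times}[m]|\cdot|\mbox{Cl}_M[m]|$, split the local factor via CRT to get the $C^{|P|}$ term, and invoke \eqref{H-def} together with $d_{M/\QQ}=d_{k/\QQ}^{t}\nm_{k/\QQ}(d_{M/k})$ for the class-group term. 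The only differences are cosmetic: you are somewhat more careful about choosing the conductor exponents $e_{\mathfrak{p}}$ uniformly and about tracking the unit contribution $|\sO_M^{\times}/(\sO_M^{\times})^{m}|$, but the paper handles these by the same soft observations, so the two proofs are effectively identical.
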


\begin{proof}

Let $\mathfrak{m}=\mathfrak{m}_0\mathfrak{m}_{\infty}$ such that  $\mathfrak{m}_0$ is the product of primes in $P$  and $\mathfrak{m}_{\infty}$ consists of the real places of $M.$ Every extension $N/M$ as described above is a subfield of the ray class field of $\mathfrak{m}.$

Denote the ray class group of $\mathfrak{m}$ by $\text{Cl}_M(\mathfrak{m})$. The following is an exact sequence for ray class groups,
\[\mathcal{O}^{\times}_M\rightarrow(\mathcal{O}_M/\mathfrak{m})^{\times}\rightarrow \text{Cl}_M(\mathfrak{m})\rightarrow \text{Cl}_M\rightarrow 1. \]
By class field theory, subgroups of index $m$ of $\text{Cl}_M(\mathfrak{m})$ are in bijection with abelian extensions $N/M$ of degree $m$ with ramified primes contained in $P.$ Thus we compute the $m$-torsion of the ray class group $\text{Cl}_M(\mathfrak{m})$.
By the exact sequence above, 
\[|\text{Cl}_M(\mathfrak{m})[m]|\leq|(\mathcal{O}_M/\mathfrak{m})^{\times}[m]|\times |\mbox{Cl}_M[m]|.\]
By the Chinese remainder theorem, with $M_{\nu}$ denoting the completion of $M$ with respect to the norm $\nu$, we have 
\[(\sO_M/\mathfrak{m})^{\times}=(\sO_M/\mathfrak{m}_0)^{\times}\oplus(\sO_M/\mathfrak{m}_{\infty})^{\times}=\prod_{\mathfrak{p}\in P}(\sO_M/\mathfrak{p}^{e_{\mathfrak{p}}})^{\times}\prod_{\nu|\mathfrak{m}_{\infty}}M_{\nu}^{\times}/M_{\nu}^{+}.\]
Hence we have that $|(\sO_M/\mathfrak{m}_0)^{\times}[m]|$  
is bounded above by $2(m^{[M:\QQ]|P|}).$
There are at most $[M:\QQ]$ distinct real places, hence the contribution to the $|(\sO/\mathfrak{m})^{\times}[m]|$ from the real places is bounded above by $2^{[M:\QQ]}.$
Hence, by choosing $C\geq (2m)^{2[M:\QQ]}$,
\[|(\mathcal{O}_M/\mathfrak{m})^{\times}[m]|\leq C^{|P|}.\]By \eqref{H-def},
\eqalnt
& |\mbox{Cl}_M[m]|\ll d_{M/\QQ}^{\mathcal{D}}\ll (d_{k/\QQ}^t\nm_{k/\QQ}(d_{M/k}))^{\mathcal{D}}\ll_{k}\nm_{k/\QQ}(d_{M/k})^{\mathcal{D}}.\\\eeqal
By combining the two pieces above, we have that the number of abelian extensions of degree $m$ with discriminant supported on $\mathfrak{m}$ is bounded above by $C^{|P|}O_{k}(\nm_{k/\QQ}(d_{M/k})^{\mathcal{D}}).$

\end{proof}

\begin{lem}\label{powerlem}
Fix a tower of number fields $M/k/\QQ$. Fix $G\in\mathcal{F}_1$ and its corresponding normal abelian subgroup $F.$ For any integer $s$ such that $s|m$ and $s>1$ we have that

\begin{equation}\label{power}
\Bigg|\Bigg\{\begin{aligned}\fN/M:& [\fN:M]=\fm \text{, } \fN/M\text{ is abelian}\\ \text{Gal}(\hat{\fN}/M)=F\text{, } &\text{Gal}(\hat{\fN}/k)=G,\text{ } \nm_{M/\QQ}(d_{\fN/M})\leq X \end{aligned}\Bigg\}\Bigg|\ll_{G,\epsilon,k}C^{\omega(\nm_{k/\QQ}(d_{M/k}))} \nm_{k/\QQ}(d_{M/k})^{\mathcal{D}+\epsilon} X^{1/R}
\end{equation}
where $R$ is the largest integer such that for all primes $q\in\ZZ$ such that $q\nmid d_{M/\QQ}[\fN:\QQ]$ and $q|\nm_{M/\QQ}(d_{\fN/M})$, $q^R|\nm_{M/\QQ}(d_{\fN/M}).$

If $\fN/k$ is a Galois extension, $R\geq |G|(1-p^{-1})$, if $\fN/M$ is not Galois, $R \geq p-1$ where $p$ is the smallest divisor of $|F|$.

\end{lem}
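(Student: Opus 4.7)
The approach is to decompose the ramification of $L/M$ according to whether the rational prime below each ramified prime of $M$ divides $d_{M/\QQ}\cdot[L{:}\QQ]$ (the ``bad'' part) or is coprime to it (the ``tame'' part), bound each contribution separately, and combine via Lemma \ref{cftlem}. The number of bad rational primes is $\omega(d_{M/\QQ})+\omega([L{:}\QQ])\ll_{k,G} \omega(\nm_{k/\QQ}(d_{M/k}))+O_G(1)$, with at most $[M{:}\QQ]$ primes of $M$ above each, so enumerating the bad part of the ramification set will contribute the factor $C^{\omega(\nm_{k/\QQ}(d_{M/k}))}$ after enlarging $C$.

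For the tame part, fix a rational prime $q\nmid d_{M/\QQ}[L{:}\QQ]$ dividing $\nm_{M/\QQ}(d_{L/M})$ and compute $v_q$. Since $L/M$ is abelian, all primes of $L$ above a given $\mathfrak{P}\mid q$ of $M$ share a common ramification index $e_{\mathfrak{P}}$ in $L/M$, and the tame ramification formula gives $v_{\mathfrak{P}}(d_{L/M})=m(1-1/e_{\mathfrak{P}})$. Summing over $\mathfrak{P}\mid q$,
\[v_q\bigl(\nm_{M/\QQ}(d_{L/M})\bigr)=\sum_{\mathfrak{P}\mid q} f(\mathfrak{P}/q)\, m(1-1/e_{\mathfrak{P}})\geq m(1-1/p)\geq p-1,\]
using $p\mid m$; this yields the unconditional bound $R\geq p-1$. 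When $L/k$ is itself Galois with group $G$, Galois transitivity of $G$ on primes $\mathfrak{Q}\mid\mathfrak{p}$ of $L$ above a fixed prime $\mathfrak{p}$ of $k$ makes the inertia subgroups $I_{\mathfrak{Q}}(L/k)$ conjugate in $G$; normality of $F$ then forces their intersections $I_{\mathfrak{Q}}(L/M)=I_{\mathfrak{Q}}(L/k)\cap F$ to be conjugate, hence equal in order. Thus $e_{\mathfrak{P}}$ depends only on the underlying $\mathfrak{p}$, and $\sum_{\mathfrak{P}\mid\mathfrak{p}} f(\mathfrak{P}/\mathfrak{p})=t$ (as $\mathfrak{p}$ is unramified in $M/k$), yielding $v_q\geq t\cdot m(1-1/p)=|G|(1-1/p)$.

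Let $n$ be the product of the tame rational primes in the support of $\nm_{M/\QQ}(d_{L/M})$. The valuation bounds above give $n^R\leq \nm_{M/\QQ}(d_{L/M})\leq X$, so $n\leq X^{1/R}$. For each squarefree $n$ and each choice of bad-ramification set, the ramification set $P$ of primes of $M$ has $|P|\leq[M{:}\QQ]\,\omega(n)+O_{k,G}(\omega(\nm_{k/\QQ}(d_{M/k})))$, and Lemma \ref{cftlem} bounds the count of corresponding $L/M$ by $C^{|P|}\,O_k(\nm_{k/\QQ}(d_{M/k})^{\mathcal{D}})$. Summing over squarefree $n\leq X^{1/R}$ via the standard estimate $\sum_{n\leq Y}C^{\omega(n)}\ll_\epsilon Y^{1+\epsilon}$ and absorbing the bad-prime contribution into the $C^{\omega(\nm_{k/\QQ}(d_{M/k}))}$ factor produces the claimed bound.

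The main obstacle will be the Galois case inequality $R\geq|G|(1-1/p)$, which requires uniformity of $e_{\mathfrak{P}}$ along $\mathfrak{P}\mid\mathfrak{p}$ for each $\mathfrak{p}$ of $k$. This rests squarely on the normality of $F$ in $G$: without it, the conjugate inertia groups $I_{\mathfrak{Q}}(L/k)$ could meet $F$ in non-conjugate subgroups of differing orders, and the factor of $t$ gained from summing over $\mathfrak{P}\mid\mathfrak{p}$ would collapse, leaving only the weaker $R\geq p-1$.
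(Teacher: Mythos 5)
Your proposal takes essentially the same route as the paper: apply Lemma \ref{cftlem} with the support fixed, split the ramified rational primes into a ``bad'' set ($\ell \mid d_{M/\QQ}[\fN{:}\QQ]$, controlled by $C^{\omega(\nm_{k/\QQ}(d_{M/k}))}$ after using the tower formula $d_{M/\QQ}=d_{k/\QQ}^t\nm_{k/\QQ}(d_{M/k})$) and the tame set, show each tame prime enters $\nm_{M/\QQ}(d_{\fN/M})$ to power at least $R$, and then sum $C^{\omega(n)}$ over squarefree $n\le X^{1/R}$. You give slightly more detail than the paper on the Galois case, spelling out via conjugacy of inertia groups and normality of $F$ why $e_{\mathfrak{P}}$ is constant over the fiber above a fixed prime of $k$; the paper states this step tersely.

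One small error you should repair: you write $v_{\mathfrak{P}}(d_{\fN/M}) = m(1-1/e_{\mathfrak{P}})$, but $[\fN:M] = s$, which may be a proper divisor of $m$, so the tame identity is $v_{\mathfrak{P}}(d_{\fN/M}) = s(1-1/e_{\mathfrak{P}})$. Your intermediate bound $v_q \geq m(1-1/p)$ is therefore unjustified in the non-Galois case; the correct intermediate bound is $v_q \geq s(1-1/p)$, which still yields $v_q \geq p-1$ since $s\mid m$ and $s>1$ imply $s\geq p$. (In the Galois case one has $s=m$ automatically, so that branch is fine as written.) With this replacement the argument is complete and matches the paper's proof.
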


The tower of fields $N/L/M/k$ may be represented as follows:
\begin{equation}\notag
  \begin{tikzcd}
    N   &  \\ 
        &   \\
    &   L\arrow{uul}{m/\fm}   \\
        & M =\mbox{Fix}(F)\arrow{u}{\fm} \\
 k\arrow[swap]{ur}{|H|}\arrow[swap]{uuuu}{|G|}  &  \\
  \end{tikzcd}
\end{equation}
\begin{proof}

Let $P_{L/M}(\sO_M)$ denote the set of primes in $M$ that ramify in the extension $L/M.$
Let $P_{\fN/M}(\ZZ)$ be the set of primes in $\ZZ$ that divide $\nm_{M/\QQ}(d_{\fN/M})$ and let $\ell$ denote a prime in $\ZZ$. Since $|P_{\fN/M}(\ZZ)|=O_{k,G}(|P_{\fN/M}(\sO_M)|)$, by Lemma \ref{cftlem}, with $M/\QQ$ and $P_{\fN/M}(\ZZ)$ fixed, we have
\[\sum_{\substack{\fN/M\\ [\fN:M]=\fm\text{, }\fN/M\text{ is abelian}\\ \ell|\nm_{M/k}(d_{\fN/M})\rightarrow \ell\in P_{L/M}(\ZZ)}}1\ll_{G,\epsilon,k} \nm_{k/\QQ}(d_{M/k})^{\mathcal{D}} C^{|P_{\fN/M}(\ZZ)|}.\] 
Now we relax the condition that $P_{\fN/M}(\ZZ)$ is fixed, and sum over all degree $\fm$ extensions $\nm_{M/\QQ}(d_{\fN/M})\leq X$. First we define $\mathbb{P}_M(X)$ as

\[\mathbb{P}_M(X)=\bigcup_{\substack{\fN/M\\ [\fN:M]=\fm, \fN/M \text{ is abelian}\\\nm_{M/\QQ}(d_{\fN/M})\leq X}}P_{\fN/M}(\ZZ).\]

 Hence we have

\eqal\label{power1}
\sum_{\substack{\fN/M\\ [\fN:M]=\fm\text{, } \fN/M\text{ is abelian}\\ \text{Gal}(\hat{\fN}/M)=F\text{, } \text{Gal}(\hat{\fN}/k)=G\\  \nm_{M/\QQ}(d_{\fN/M})\leq X }}1\ll \nm_{k/\QQ}(d_{M/k})^{\mathcal{D}}\sum_{P_{\fN/M}(\ZZ)\in\mathbb{P}_M(X)}C^{|P_{\fN/M}(\ZZ)|}\eeqal
We can split each $P_{\fN/M}(\ZZ)$ into the union of two disjoint sets, $P_{\fN/M}(\ZZ)=U_{\fN/M}(\ZZ)\cup V_{\fN/M}(\ZZ)$ such that

\begin{itemize}
    \item Every  $\ell\in U_{\fN/M}(\ZZ)$ is such that $\ell$ is tamely ramified in $\fN/\QQ$ and $\ell\nmid d_{M/\QQ}.$
    \item Every  $\ell\in V_{\fN/M}(\ZZ)$ is such that $\ell| d_{M/\QQ}[\fN:\QQ]$.
\end{itemize}
Hence
\[\mathbb{P}_M(X)=\left(\bigcup_{\substack{\fN/M\\ [\fN:M]=\fm, \fN/M \text{ is abelian}\\\nm_{M/\QQ}(d_{\fN/M})\leq X}}U_{\fN/M}(\ZZ)\right)\cup\left(\bigcup_{\substack{\fN/M\\ [\fN:M]=\fm, \fN/M \text{ is abelian}\\\nm_{M/\QQ}(d_{\fN/M})\leq X}}V_{\fN/M}(\ZZ)\right).\]

 This implies that
\eqal
\nm_{k/\QQ}(d_{M/k})^{\mathcal{D}}\sum_{P_{\fN/M}(\ZZ)\in\mathbb{P}_M(X)}C^{|P_{\fN/M}(\ZZ)|}\ll 
\nm_{k/\QQ}(d_{M/k})^{\mathcal{D}} \sum_{U_{\fN/M}(\ZZ)\in \mathbb{P}_M(X)}C^{|U_{\fN/M}(\ZZ)|}\sum_{V_{\fN/M}(\ZZ)\in \mathbb{P}_M(X)}C^{|V_{\fN/M}(\ZZ)|}.\eeqal
As $d_{M/\QQ}[\fN:\QQ]$ is fixed,
\eqal\label{power2}
\sum_{V_{\fN/M}(\ZZ)\in \mathbb{P}_M(X)}C^{|V_{\fN/M}(\ZZ)|}\leq \sum_{d|d_{M/\QQ}[\fN:\QQ]}\mu^2(d)C^{\omega(d)}=(C+1)^{\omega(d_{M/\QQ}[N:\QQ])}\ll_{k,G}C_1^{\omega(d_{M/\QQ})}.\eeqal

For a prime $\ell\in U_{\fN/M}(\ZZ)$, denote the exact power of $\ell$ dividing $\nm_{M/\QQ}(d_{\fN/M})$ by $\nu_{\ell}(\nm_{M/\QQ}(d_{\fN/M})).$ Since $\ell$ is tamely ramified, we have that

\eqalnt
\ell\sO_{\fN}=
\prod_{i=1}^{g_\ell(\fN/\QQ)}\sa_{i}^{e(\sa_{i},\ell)}
\qquad\nu_{\ell}(\nm_{M/\QQ}(d_{\fN/M}))=&
\sum_{i=1}^{g_\ell(\fN/\QQ)}f(\sa_i,\ell)(e(\sa_i,\ell)-1)
\eeqal
with $e(\sa_i,\ell)$ and $f(\sa_i,\ell)$ being the respective ramification degrees and inertia degrees. Consider first the case that $\fN/k$ is a Galois extension.
In this case $m=[\fN:M]$ and we see that 
$\nu_{\ell}(\nm_{M/\QQ}(d_{\fN/M}))\geq |G|(1-p^{-1})$ where $p$ is the smallest prime divisor of $m.$ Hence we have

\eqal\label{power3}
\sum_{U_{\fN/M}(\ZZ)\in \mathbb{P}_M(X)}C^{|U_{\fN/M}(\ZZ)|}\ll \sum_{n^{|G|(1-p^{-1})}\leq X}\mu^2(n)C^{\omega(n)}\ll_{k,G}X^{\frac{1}{|G|(1-p^{-1})}+\epsilon}.
\eeqal
If $\fN/k$ is not a Galois extension, then the smallest any $e(\sa_i,\ell)>1$ can be is $p$ where $p$ is the smallest prime divisor of $|F|.$ By combining \eqref{power1}, \eqref{power2} and \eqref{power3}, we get \eqref{power}.


\end{proof}

Note that in the above proof, we only made use of the fact that $\text{Gal}(\hat{\fN}/M)=F$ and were not able to make use of the fact that $\text{Gal}(\hat{\fN}/k)=G.$

\begin{prop}\textnormal{[Abel Summation]}
Let $f$ and $g$ be functions with $f:(\NN\cap [1,X])\rightarrow \mathbb{C}$ and let $g$ be a  differentiable function on $[1,X].$
Let $M_f(X):=\sum_{n\leq X}f(n)$, we have
\eqal
\sum_{n\leq X}f(n)g(n)=M_f(X)g(X)-\int_1^XM_f(t)g^{\prime}(t)dt.
\eeqal
\end{prop}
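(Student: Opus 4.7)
The plan is to prove this by summation by parts, interpreting the discrete sum in a way that matches the integral on the right-hand side. I set $N=\lfloor X\rfloor$ and start by writing each $f(n)$ as a telescoping difference $f(n)=M_f(n)-M_f(n-1)$ (with the convention $M_f(0)=0$), so that
\[
\sum_{n\leq X}f(n)g(n)=\sum_{n=1}^{N}\bigl(M_f(n)-M_f(n-1)\bigr)g(n).
\]

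Next I would reindex the second sum to perform discrete summation by parts, obtaining
\[
\sum_{n=1}^{N}\bigl(M_f(n)-M_f(n-1)\bigr)g(n)=M_f(N)g(N)-\sum_{n=1}^{N-1}M_f(n)\bigl(g(n+1)-g(n)\bigr).
\]
Since $g$ is differentiable on $[1,X]$, the fundamental theorem of calculus gives $g(n+1)-g(n)=\int_n^{n+1}g'(t)\,dt$; and because $M_f$ is constant on each half-open interval $[n,n+1)$, equal to $M_f(n)$ there, I can fold the discrete sum into a single integral:
\[
\sum_{n=1}^{N-1}M_f(n)\bigl(g(n+1)-g(n)\bigr)=\sum_{n=1}^{N-1}\int_{n}^{n+1}M_f(t)g'(t)\,dt=\int_{1}^{N}M_f(t)g'(t)\,dt.
\]

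Finally I handle the tail from $N$ to $X$: since no integer lies in $(N,X]$, we have $M_f(X)=M_f(N)$, and likewise $M_f(t)=M_f(N)$ for $t\in[N,X]$, so
\[
M_f(X)g(X)-M_f(N)g(N)=M_f(N)\bigl(g(X)-g(N)\bigr)=\int_{N}^{X}M_f(t)g'(t)\,dt.
\]
Adding this to the identity from the previous step yields exactly $M_f(X)g(X)-\int_1^X M_f(t)g'(t)\,dt$, completing the proof.

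The argument is essentially routine, being the standard Abel/partial summation identity; the only bookkeeping point that needs mild care is the tail from $N$ to $X$, so that the upper endpoint of the integral can be replaced by $X$ rather than $\lfloor X\rfloor$. No serious obstacle is anticipated.
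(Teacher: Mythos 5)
Your proof is correct and is the standard partial-summation argument. The paper states this proposition without proof (it is a textbook fact), so there is no authorial proof to compare against; your derivation --- writing $f(n)=M_f(n)-M_f(n-1)$, performing discrete summation by parts, folding the differences $g(n+1)-g(n)$ into integrals of $g'$ via the constancy of $M_f$ on $[n,n+1)$, and then cleaning up the tail from $\lfloor X\rfloor$ to $X$ --- is exactly the usual route and each step checks out.
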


\subsection{Proof of main theorem}
Throughout this section, we use the terminology established in Notation \ref{H-int} and Diagram \ref{d1}.
We let $G\in\mathcal{F}_1$ be the Galois group of the Galois extension $N/k.$ Our strategy to obtain an upper bound for $N_m(k,G;X)$ for $G\in\mathcal{F}$  and $N_{mt}(k,G;X)$ for $G\in\mathcal{F}_1$
is to make use of \eqref{K/k-rel} and the following discriminant relation
\eqal\label{gen-N/k}
&d_{N/k}=d_{M/k}^m\nm_{M/k}(d_{N/M}).\\
\eeqal
For $G\in\mathcal{F}_1$, using \eqref{gen-N/k} we have

\eqal\label{gal}
N_{mt}(k,G;X)\leq\sum_{\substack{M/k\\ \nm_{k/\QQ}(d_{M/k})\leq  X^{1/m}\\\text{Gal}(M/k)=H}}\sum_{\substack{N/M\\ [N:M]=m, \text{ Gal}(N/M)=F \\  \nm_{M/\QQ}(d_{N/M})\leq X\nm_{k/\QQ}(d_{M/k})^{-m} }}1.\eeqal
Similarly for $G\in\mathcal{F}$, using \eqref{K/k-rel} we have
\eqal\label{gal2}
N_{m}(k,G;X)\leq\sum_{\substack{M/k\\ \nm_{k/\QQ}(d_{M/k})\leq  X^{t/(m-1)}\\\text{Gal}(M/k)=H}}\sum_{\substack{N/M\\ [N:M]=m, \text{ Gal}(N/M)=F \\  \nm_{M/\QQ}(d_{N/M})\leq X^{t}\nm_{k/\QQ}(d_{M/k})^{-(m-1)}} }1.\eeqal

We first bound \eqref{gal2}. By Lemma \ref{powerlem} the inner sum above is 

\eqalnt
\sum_{\substack{ N/M\\ [N:M]=m, \text{ Gal}(N/M)=F \\ \nm_{M/\QQ}(d_{N/M})\leq X^{t}\nm_{k/\QQ}(d_{M/k})^{-(m-1)}} }1\ll X^{\frac{p}{m(p-1)}}\nm_{k/\QQ}(d_{M/k})^{\mathcal{D}-\frac{m-1}{mt(1-p^{-1})}+\epsilon}C^{\omega(\nm_{k/\QQ}(d_{M/k}))}.\eeqal
This implies that 
\eqal\label{gal22}
N_{m}(k,G;X)\ll&X^{\frac{p}{m(p-1)}+\epsilon}\sum_{\substack{M/k\\ \nm_{k/\QQ}(d_{M/k})\leq  X^{t/(m-1)}\\\text{Gal}(M/k)=H}}\nm_{k/\QQ}(d_{M/k})^{\mathcal{D}-\frac{m-1}{mt(1-p^{-1})}+\epsilon}C^{\omega(\nm_{k/\QQ}(d_{M/k}))}.
\eeqal
By assumption we have  \[
\sum_{\substack{M/k\\ \nm_{k/\QQ}(d_{M/k})\leq  Y\\\text{Gal}(M/k)=H}}1=N_t(k,H;Y)\ll Y^{a_1(H,t)+\epsilon}.\] Consequently, by Abel summation, \eqref{gal22} is bounded above by 
\eqal\label{fin1}
N_{m}(k,G;X)\ll&X^{\frac{p}{m(p-1)}+\epsilon}\left(X^{\frac{t}{m-1}\left(a_1(H,t)+ \mathcal{D}-\frac{m-1}{mt(1-p^{-1})}\right)+\epsilon}+O(1)\right)\\
\ll&X^{\frac{p}{m(p-1)}+\epsilon}+X^{\frac{t\left(a_1(H,t)+ \mathcal{D}\right)}{m-1}+\epsilon}.\eeqal
Using \eqref{gal} we now obtain the upper bound for $N_{mt}(k,G;X)$ in similar fashion.
\eqal\label{fin}
N_{mt}(k,G;X)\ll&X^{\frac{p}{mt(p-1)}+\epsilon}
\sum_{\substack{M/k\\ \nm_{k/\QQ}(d_{M/k})\leq  X^{t/(m-1)}\\\text{Gal}(M/k)=H}}\nm_{k/\QQ}(d_{M/k})^{\mathcal{D}-\frac{p}{t(p-1)}+\epsilon}C^{\omega(\nm_{k/\QQ}(d_{M/k}))}\\
\ll&X^{\frac{p}{mt(p-1)}+\epsilon}\left(X^{\frac{1}{m}\left(a_1(H,t)+\mathcal{D}-\frac{p}{t(p-1)}\right)}+O(1)\right)\\
\ll&X^{\frac{p}{mt(p-1)}+\epsilon}+X^{\frac{a_1(H,t)+\mathcal{D}}{m}+\epsilon}.\\
\eeqal
This proves Theorem \ref{main-2}. 

We now prove Corollary \ref{main-1}. If Malle's conjecture holds for $N_t(k,H;X)$, then $a_1(H,t)=p_1/(t(p_1-1))$. Using this and assuming the $\ell$-torsion conjecture, \eqref{fin} implies that 
\[N_{mt}(k,G;X)\ll X^{\frac{p}{mt(p-1)}+\epsilon}+X^{\frac{p_1}{mt(p_1-1)}+\epsilon}.\]
Using the above and equation \eqref{a(G)=mt}
shows part $(1)$ of Corollary \ref{main-1}.
Assuming the $\ell$-torsion conjecture and Malle's conjecture for $N_t(k,H;X)$, \eqref{fin1} agrees with \eqref{a(G)=m}. This shows part $(2)$ of Corollary \ref{main-1}.

\section{Explicit Computations and Extensions}

In this section, the tools needed to prove the examples in Example \ref{exmp1} will be addressed. The section has two parts. The first part addresses the flexibility of the method of proof as we may use it to count subfields of various degrees. The next part addresses the occasions in which we can say something non-trivial about the size of the $m$-torsion of the class group. Throughout this section, we use the same notation as in Notation \ref{H-int}. 

\subsection{Extensions}\label{exts}

In this section we obtain upper bounds for $N_6(k,C_3^2\rtimes C_4;X)$ and  $N_6(k,A_4;X).$ In terms of MAGMA notation, we obtain upper bounds for $N_6(k,6T10;X)$ and $N_6(k,6T4;X).$ The method used to obtain these bounds is an extension of the method used to prove Theorem \ref{main-2}, and can be used to bound $N_d(k,G;X)$ for various $d$ that depend on $G.$

\begin{prop}
For any number field $k$, the number of degree 6 extensions $N_1/k$ with $\text{Gal}(\hat{N}_1/k)=A_4$ satisfies
\eqal
N_6(k,A_4;X)\ll X^{1/2+\epsilon}.\eeqal
The upper bound is as predicted by Malle.
\end{prop}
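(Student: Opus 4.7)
The plan is to exploit the subgroup structure of $A_4$: it has the abelian normal subgroup $V_4$ with quotient $C_3$, so $A_4 \in \mathcal{F}_1$ with $m=4$, $t=3$, but the degree $6 \neq mt$ forces us to work with a subfield $K \subset N$ corresponding to an order-$2$ subgroup $J \subset V_4$. There are three such subgroups, and they are cyclically permuted by the conjugation action of $A_4/V_4 = C_3$, so each Galois $A_4$-extension $N/k$ yields exactly three distinct degree-$6$ subfields in $\bar k$. Setting $M = N^{V_4}$ (the unique $C_3$-subextension), I get the diagram $k \subset M \subset K \subset N$ with $[K:M]=2$ and $[N:M]=4$, and it suffices to bound the number of Galois $A_4$-extensions $N/k$ (with an appropriate discriminant constraint) and multiply by $3$.

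Next I would derive the discriminant relation. The tower formula gives $d_{K/k} = d_{M/k}^2 \cdot N_{M/k}(d_{K/M})$. Since $N/M$ is $V_4$-Galois with three quadratic subfields $K_1, K_2, K_3$ (one of which is $K$), the conductor--discriminant formula gives $d_{N/M} = d_{K_1/M}\, d_{K_2/M}\, d_{K_3/M}$; and because these three quadratic subfields are $\mathrm{Gal}(M/k)$-conjugate (as their stabilizers are the three order-$2$ subgroups of $V_4$, cyclically permuted by $C_3$), all three ideals have the same norm down to $\mathbb{Q}$. Therefore $\nm_{M/\mathbb{Q}}(d_{N/M}) = \nm_{M/\mathbb{Q}}(d_{K/M})^3$, which after taking norms yields
\[
\nm_{k/\mathbb{Q}}(d_{K/k})^3 = \nm_{k/\mathbb{Q}}(d_{M/k})^6 \cdot \nm_{M/\mathbb{Q}}(d_{N/M}).
\]
In particular, $\nm_{k/\mathbb{Q}}(d_{K/k}) \leq X$ forces $\nm_{k/\mathbb{Q}}(d_{M/k}) \leq X^{1/2}$ and $\nm_{M/\mathbb{Q}}(d_{N/M}) \leq X^3 / \nm_{k/\mathbb{Q}}(d_{M/k})^6$.

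Now I would run the same summation scheme as in the proof of Theorem \ref{main-2}. Sum over cubic $C_3$-extensions $M/k$ with $\nm_{k/\mathbb{Q}}(d_{M/k}) \leq X^{1/2}$, and for each such $M$ apply Lemma \ref{powerlem} with $G=A_4$, $F=V_4$, $m=4$, $p=2$; since $N/k$ is Galois, the exponent is $R \geq |G|(1-p^{-1}) = 12 \cdot \tfrac{1}{2} = 6$. The inner count becomes
\[
\ll C^{\omega(\nm_{k/\mathbb{Q}}(d_{M/k}))}\,\nm_{k/\mathbb{Q}}(d_{M/k})^{\mathcal{D}+\epsilon}\,\bigl(X^3/\nm_{k/\mathbb{Q}}(d_{M/k})^6\bigr)^{1/6} = X^{1/2+\epsilon}\,\nm_{k/\mathbb{Q}}(d_{M/k})^{\mathcal{D}-1+\epsilon},
\]
where $\mathcal{D}$ bounds $|\mathrm{Cl}_M[2]|$ for $C_3$-extensions. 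Using the trivial Minkowski bound $\mathcal{D} = 1/2$, the exponent on $\nm_{k/\mathbb{Q}}(d_{M/k})$ becomes $-1/2 + \epsilon$. Combining with Wright's bound $N_3(k,C_3;Y) \ll Y^{1/2+\epsilon}$ and applying Abel summation, $\sum_M \nm_{k/\mathbb{Q}}(d_{M/k})^{-1/2+\epsilon} \ll X^{\epsilon'}$ up to $Y = X^{1/2}$. Multiplying gives $N_6(k, A_4; X) \ll X^{1/2+\epsilon}$.

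The genuinely delicate point is the conductor--discriminant identification $\nm_{M/\mathbb{Q}}(d_{N/M}) = \nm_{M/\mathbb{Q}}(d_{K/M})^3$: it is what allows a Galois-extension counting lemma (Lemma \ref{powerlem}, whose exponent $R=6$ reflects the degree of the full Galois closure) to be converted into a count for the non-Galois degree-$6$ field $K$, and it is exactly the trick that shaves the exponent from the naive $3/4$ (which one would get by reducing to counting $A_4$-extensions ordered by $d_{N/k}$) down to the Malle-optimal $1/2$. Everything else is bookkeeping: discriminant powers of tamely ramified rational primes, the $C^{\omega(\cdot)}$ factors absorbed into $X^\epsilon$, and the convergence of the sum over $M$.
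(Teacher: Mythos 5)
Your argument is correct, but it takes a genuinely different route than the paper. The paper works directly with the degree-$6$ field (their $N_1$, your $K$) via the tower relation $d_{N_1/k}=d_{M/k}^2\nm_{M/k}(d_{N_1/M})$ and then counts quadratic extensions $N_1/M$; the key technical input is Kl\"uners' Lemma~4 of \cite{klu12}, which says that if a rational prime unramified in $M/k$ exactly divides $\nm_{M/\QQ}(d_{N_1/M})$ then $\mathrm{Gal}(\hat N_1/k)=C_2\wr C_3$. Since $A_4\neq C_2\wr C_3$, this forces $R=2$ in Lemma~\ref{powerlem} for the quadratic extension, giving the inner bound $X^{1/2+\epsilon}\,\nm_{k/\QQ}(d_{M/k})^{\mathcal D-1+\epsilon}C^{\omega(\cdot)}$. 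You instead lift to the $V_4$-Galois extension $N/M$, invoke the conductor--discriminant formula and the $C_3$-conjugacy of the three quadratic subfields to get the clean identity $\nm_{k/\QQ}(d_{K/k})^3=\nm_{k/\QQ}(d_{M/k})^6\nm_{M/\QQ}(d_{N/M})$, and then apply Lemma~\ref{powerlem} with $\fN=N$ Galois over $k$, obtaining $R\ge 6$. Plugging in $(X^3/\nm^6)^{1/6}=X^{1/2}/\nm$ recovers exactly the same inner bound, so the two arguments merge from that point on. Your route avoids citing Kl\"uners' wreath-product lemma and makes the correlated-ramification phenomenon structural (the three conjugate conductors have equal norm, so $\nm(d_{N/M})$ is automatically a perfect cube), at the cost of requiring that $\mathrm{Gal}(M/k)$ permute the index-$2$ subgroups of $F$ transitively --- a feature specific to $A_4$ that does not carry over, for instance, to the paper's parallel treatment of $N_6(k,C_3^2\rtimes C_4;X)$, where the conjugacy structure is different and they pass instead through the quadratic subfield $M_1$ of $M$. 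The Kl\"uners-lemma route is thus more portable; yours is more self-contained for this particular group.
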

Note that $A_4$ has normal abelian subgroup $F=\{e, (12)(34),(13)(24),(14)(23)\}.$
The subgroup $F_1=\{e,(12)(34)\}$ of $F$ is not normal in $A_4$.
To count the number of degree 6 extensions with Galois group $A_4$, we fix a base field $k$ and let $N/k$ vary over Galois extensions with Galois group $A_4.$  
For each fixed $N/k$, we have the following field diagram with $M=\mbox{Fix}(F)$, and up to conjugacy we set $K=\mbox{Fix}(C_3)$ and $N_1=\mbox{Fix}(F_1).$

\[
  \begin{tikzcd}
  &  &N   &  & \\ 
  &  &   & N_1\arrow{ul}{2}& \\ 
  K\arrow{uurr}{3}  &  &   &  & M   \arrow{ul}{2}\\
  &  &  k\arrow{ull}{4} \arrow[swap]{urr}{3}\arrow{uur}{6} &  & \\   
  \end{tikzcd}
\]
To show the proposition note that $d_{N_1/k}=d_{M/k}^2\nm_{M/k}(d_{N_1/M})$. Hence, $N_6(k,A_4;X)$ is bounded above by

\eqalnt
\sum_{\substack{M/k\\ \nm_{k/\QQ}(d_{M/k})\leq  X^{1/2}\\\text{Gal}(M/k)=C_3}}\sum_{\substack{N_1/M\\ [N_1:M]=2\\  \text{Gal}(\hat{N_1}/k)=A_4\\  \nm_{M/\QQ}(d_{N_1/M})\leq X\nm_{k/\QQ}(d_{M/k})^{-2}} }1\\
\eeqal
Now we use a result by Kl{\"u}ners. 
\begin{lem}\textnormal{[Kl{\"u}ners, \cite{klu12}, Lemma 4]}\label{kl2}
Let $N_1/M/k$ be extensions of number fields with $\text{Gal}(M/k) = H$ and $[N_1 : M] = 2$. Assume there exists a prime $p$ which is unramified in $M/k$ with   $p||\nm_{M/\QQ}(d_{N_1/M})$. Then $\text{Gal}(\hat{N_1}/k) = C_2 \wr H$.
\end{lem}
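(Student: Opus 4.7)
My plan is to apply Kummer theory: write $N_1 = M(\sqrt{\alpha})$, study the $H$-orbit $\{\alpha_h := h(\alpha)\}_{h \in H}$ in $M^\times/(M^\times)^2$, and use the ramification hypothesis at $p$ to force $\FF_2$-linear independence of this orbit. Once this is done, $\hat{N_1}/M$ will reach the maximum possible degree $2^{|H|}$ and the permutation action of $H$ on the orbit will realize the full wreath product structure.

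To set up, let $n = |H|$ and let $V$ be the $\FF_2$-span of $\{\alpha_h\}$ in $M^\times/(M^\times)^2$. Kummer duality gives $\Gal(\hat{N_1}/M) \cong \Hom(V, C_2)$, and the $H$-action $g \cdot \alpha_h = \alpha_{gh}$ on $V$ is the regular permutation representation; dually it matches the conjugation action of lifts of $g$ on $\Gal(\hat{N_1}/M)$ sitting inside $\Gal(\hat{N_1}/k)$. Thus $\Gal(\hat{N_1}/k)$ embeds naturally into $C_2^H \rtimes H = C_2 \wr H$, with equality iff $\dim_{\FF_2} V = n$. The whole proof reduces to this independence claim.

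For the ramification data, the norm expansion $1 = v_p(\nm_{M/\QQ}(d_{N_1/M})) = \sum_{\mathfrak{p} \mid p} f(\mathfrak{p}/p)\,v_{\mathfrak{p}}(d_{N_1/M})$ forces a unique prime $\mathfrak{p}_0 \mid p$ of $M$ with $v_{\mathfrak{p}_0}(d_{N_1/M}) = 1$ and $f(\mathfrak{p}_0/p) = 1$. A ramified quadratic extension of a $2$-adic local field has local discriminant exponent at least $2$, so this already rules out $p = 2$, and $p$ is odd. Combining $e(\mathfrak{p}_0/\mathfrak{q}) = 1$ (from $p$ unramified in $M/k$, with $\mathfrak{q} := \mathfrak{p}_0 \cap k$) with $f(\mathfrak{p}_0/\mathfrak{q}) \mid f(\mathfrak{p}_0/p) = 1$ shows the decomposition group of $\mathfrak{p}_0$ in $M/k$ is trivial, so the $H$-orbit $\{h(\mathfrak{p}_0) : h \in H\}$ consists of $n$ distinct primes.

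I can then close with a parity argument. Since $p$ is odd, $M(\sqrt{\beta})/M$ ramifies at $\mathfrak{p}_0$ iff $v_{\mathfrak{p}_0}(\beta)$ is odd; so the hypothesis gives $v_{\mathfrak{p}_0}(\alpha)$ odd and $v_{\mathfrak{p}'}(\alpha)$ even for the other primes $\mathfrak{p}' \mid p$. Then $v_{\mathfrak{p}_0}(\alpha_h) = v_{h^{-1}\mathfrak{p}_0}(\alpha)$ has odd parity iff $h^{-1}\mathfrak{p}_0 = \mathfrak{p}_0$, iff $h = e$. A hypothetical nontrivial relation $\prod_{h \in S} \alpha_h \in (M^\times)^2$ with $\emptyset \neq S \subseteq H$ would, by the $v_{\mathfrak{p}_0}$-parity, force $e \notin S$; translating by an arbitrary $g \in H$ (so $S \mapsto gS$) yields $e \notin gS$ for every $g$, i.e.\ $g^{-1} \notin S$ for every $g \in H$, forcing $S = \emptyset$ and producing the contradiction. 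The main subtlety I expect is the ramification bookkeeping across $\QQ \subset k \subset M$, together with the early invocation of the $2$-adic local discriminant bound needed to license the parity-of-valuation criterion for quadratic ramification; the remainder is routine Kummer theory and $\FF_2$-linear algebra.
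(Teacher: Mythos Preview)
The paper does not prove this lemma; it is quoted from Kl\"uners and used as a black box, so there is no in-paper argument to compare against. Your Kummer-theoretic proof is correct: the hypothesis $p\,\|\,\nm_{M/\QQ}(d_{N_1/M})$ forces $p$ odd (via the $2$-adic discriminant bound), singles out a unique prime $\mathfrak{p}_0\mid p$ of $M$ ramifying in $N_1/M$ with trivial decomposition group in $H$, and the $v_{\mathfrak{p}_0}$-parity then witnesses $\mathbb{F}_2$-independence of the $H$-orbit of $\alpha$ in $M^\times/(M^\times)^2$, so that $\Gal(\hat N_1/M)$ has the full order $2^{|H|}$. The one step you leave slightly implicit is why the extension $1\to\Gal(\hat N_1/M)\to\Gal(\hat N_1/k)\to H\to 1$ is isomorphic to the wreath product once the kernel is $\mathbb{F}_2[H]$; your stated embedding via the imprimitive action on $\Hom_k(N_1,\bar k)$ already handles this (an injection into $C_2\wr H$ between groups of equal order), or alternatively one notes $H^2(H,\mathbb{F}_2[H])=0$ by Shapiro's lemma.
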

Since $A_4\neq C_2\wr C_3$, there can be no prime $p\in\NN$ that is unramified in $M$ and $p|| \nm_{M/\QQ}(d_{N_1/M}).$ This implies that $R$ in Lemma \ref{powerlem}, is 2. Hence by Lemma \ref{powerlem}, 

\eqal
N_{6}(k,A_4;X)
&\ll X^{1/2+\epsilon}
\sum_{\substack{M/k\\ \nm_{k/\QQ}(d_{M/k})\leq  X^{1/2}\\\text{Gal}(M/k)=C_3}}\nm_{k/\QQ}(d_{M/k})^{\mathcal{D}-1+\epsilon}C^{\omega(\nm_{k/\QQ}(d_{M/k}))}\eeqal
Now, using that $M/k$ is a normal extension, and $N_3(k,C_3;X)\ll X^{1/2+\epsilon}$, and using partial summation, we have that
\eqalnt
N_{6}(k,A_4;X)&\ll X^{1/2+\epsilon}\left((X^{1/2})^{1/2+\mathcal{D}-1+\epsilon}+O(1)\right)\ll X^{1/2+\epsilon}.\\\eeqal
This proves the proposition.

\begin{rem}
By the same method we can show results like $N_{14}(k,C_2^3\rtimes C_7;X)\ll X^{1/2+\epsilon}.$

\end{rem}

\begin{exmp}
In this example we show $N_{6}(k,C_3^2\rtimes C_4;X)\ll X^{1/2+\epsilon}$. This improves on the result of Alberts (\cite{ED} Appendix A) who was able to show  $N_6(k,C_3^2\rtimes C_4;X)\ll X^{2+\epsilon}.$ Malle's conjectured upper bound for  $N_6(k,C_3^2\rtimes C_4;X)$ is $O(X^{1/2+\epsilon}),$ as well.

We start by fixing $k$ and letting $N/k$ vary over Galois extensions with Galois group $\text{Gal}(N/k)=C_3^2\rtimes C_4$. As $N/k$ varies, we have the following field diagram where the degree 6 extensions of $k$ are denoted by $N_1/k.$ Here, $N_1/k$ need not be unique. Here $M=\mbox{Fix}(C_3^2)$, $M_1$ is the degree 2 subfield of $M/k$ and up to conjugacy we set $K=\mbox{Fix}(C_4)$.

\[
  \begin{tikzcd}
  &  &N   &  & \\ 
  &  &   & & \\ 
  K\arrow{uurr}{4}  &  & N_1\arrow{uu}{6}  &  & M   \arrow{uull}{9}\\  
  &  &   & M_1\arrow{ur}{2}\arrow[swap]{ul}{3}& \\   
  &   & k\arrow{uull}{9} \arrow[swap]{ur}{2}\arrow[swap]{uu} &  &
  \end{tikzcd}
\]
We note that $d_{N_1/k}=d_{M_1/k}^3\nm_{M_1/k}(d_{N_1/M_1})$ and that $M/k$ is a Galois extension. Since $C_2$ is normal in $C_4$, $M_1/k$ is a Galois extension. Fixing $M_1/k$ we count the extensions $N_1/M_1$ with discriminant supported on a certain modulus. In particular, $N_6(k,C_3^2\rtimes C_4;X)$ is bounded above by

\eqalnt
\sum_{\substack{M_1/k\\ \nm_{k/\QQ}(d_{M_1/k})\leq  X^{1/3}\\\text{Gal}(M_1/k)=C_2}}\sum_{\substack{N_1/M_1\\ [N_1:M_1]=3\text{, } N_1/M_1\text{ is abelian}\\  \text{Gal}(\hat{N_1}/k)=C_3^2\rtimes C_4\\  \nm_{M/\QQ}(d_{N_1/M_1})\leq X\nm_{k/\QQ}(d_{M_1/k})^{-3}} }1.\\
\eeqal
By Lemma \ref{powerlem} for $N_1/M_1$, we have that $R=2$ and we notice that
\eqalnt
N_{6}(k,C_3^2\rtimes C_4;X)&\ll X^{1/2+\epsilon} \sum_{\substack{M_1/k\\ \nm_{k/\QQ}(d_{M_1/k})\leq  X^{1/3}\\\text{Gal}(M_1/k)=C_2}} \nm_{k/\QQ}(d_{M_1/k})^{\mathcal{D}-3/2}C^{\omega( \nm_{k/\QQ}(d_{M_1/k}))}\\
&\ll X^{1/2+\epsilon}\left((X^{1/3})^{1+\mathcal{D}-3/2+\epsilon}+O(1)\right)\ll X^{1/2+\epsilon}\\
\eeqal
where $\mathcal{D}=\mathcal{D}(C_2,3)$.

Above we see that since $1+\mathcal{D}-3/2\leq0,$ getting a better upper bound than $1/2$ for $\mathcal{D}$ will not further improve the upper bound for $N_6(k,C_3^2\rtimes C_4;X).$ 

\begin{rem}
The same method can be used to show that $N_6(k,C_3^2\rtimes C_2;X)\ll X^{1/2+\epsilon}$ and $N_6(k,D_{6};X)\ll X^{1/2+\epsilon}$, these are exactly what Malle's conjecture predicts. 
\end{rem}

\end{exmp}


\subsection{Size of the Class group}
The best general upper bound for $\mathcal{D}$ is $1/2+\epsilon$ however, we can do better in certain cases. For instance, in Example \ref{exmp1} the result $N_4(k,A_4;X)\ll X^{0.77+\epsilon}$ makes use of non-trivial bounds for the two torsion of the class group. In particular,  Bhargava, Shankar,  Taniguchi, Thorne,  Tsimerman, and Zhao \cite{2tors} show that for any degree 3 extension $M/\QQ$, $|\text{Cl}_M[2]|\ll_{\epsilon}|d_{M/\QQ}^{0.2784\dots+\epsilon}|.$
Similarly, we explore the consequence of another such result about non-trivial bounds on the size of the torsion of the class group. If $M/\QQ$ is a Galois extension with Galois group $C_{p_1}$ where $p_1$ is a prime, then we can use of non trivial estimates for $\text{Cl}_M[\ell]$ for any $\ell.$ We make use of this in Example \ref{exmp1} where we show that $N_{103}(\QQ,C_{103}\rtimes C_{17};X)\ll X^{0.09369+\epsilon}$ (as opposed to what we would get otherwise, ie, $\ll X^{0.09375+\epsilon}$). Pierce, Turnage-Butterbaugh and Wood establish non-trivial bounds on $\mathcal{D}$ for most prime cyclic extensions $M/\QQ$, and this suffices for our purpose. In order to make this rigorous, we need to establish terminology as stated in \cite{PTBW}. 

\begin{defn}{($\delta-$exceptional field)}

Let $M/\QQ$ be a degree $n$ extension with Galois group $G.$ Such an extension is called a $\delta-$exceptional field for $0<\delta<1/2$ precisely when the Dedekind zeta
function of the Galois closure $\hat{M}$ of $M$ over $\QQ$ has the property that 
$\zeta_{\hat{M}}(s)/\zeta(s)$ has a zero in the region 
\[ [1-\delta]\times[-(\log{d_{\hat{M}/\QQ}})^{2/\delta},(\log{d_{\hat{M}/\QQ}})^{2/\delta}].\]
\end{defn}

\begin{thm}\textnormal{[\cite{PTBW}, Theorem 1.19]}\label{ptw}
Let $M/\QQ$ represent degree $p$ extensions with Galois group $C_p$ where $p$ is a prime, with $d_{M/\QQ}\leq X$. Fix $0<\epsilon_0<1/(4(p-1))$. Define 
\[\delta=\frac{\epsilon_0}{5p+2/(p-1)+4\epsilon_0}.\]
Then we have that there are at most 
$O_{p,\epsilon_0}(X^{\epsilon_0})$ fields $M/\QQ$ that are $\delta-$exceptional. Aside from the $\delta-$exceptional fields, every field extension $M/\QQ$ that is counted  in $N_p(\QQ,C_p;X)$ satisfies the following, for every $\ell\in\NN$

\eqal
|\text{Cl}_{M}[\ell]|\ll_{p,\ell,\epsilon}d_{M/\QQ}^{\frac{1}{2}-\frac{1}{2\ell(p-1)}+\epsilon}.
\eeqal
\end{thm}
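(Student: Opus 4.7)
The plan is to combine the Ellenberg--Venkatesh method for bounding $\ell$-torsion in class groups with effective Chebotarev in the presence of a zero-free region, and then to control the number of fields for which such a zero-free region fails by a log-free zero-density estimate for the family of order-$p$ Dirichlet $L$-functions.

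First I would invoke the Ellenberg--Venkatesh inequality in the form: for any number field $M$ of degree $n$, any integer $\ell\geq 1$, and any $1<y\leq d_{M/\QQ}^{1/(2\ell)}$,
\[
|\text{Cl}_M[\ell]|\ll_{n,\ell,\epsilon}d_{M/\QQ}^{1/2+\epsilon}\,|S(y)|^{-1+\epsilon},
\]
where $S(y)$ is the set of rational primes $q\leq y$ that split completely in $M$. Applied with $n=p$ and $y\asymp d_{M/\QQ}^{1/(\ell(p-1))}$, a lower bound $|S(y)|\gg y/\log y$ collapses the right-hand side to $d_{M/\QQ}^{1/2-1/(2\ell(p-1))+\epsilon}$, which is precisely the bound asserted in the theorem.

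Second I would produce the primes in $S(y)$ by an effective prime ideal theorem. Because $\text{Gal}(M/\QQ)=C_p$, one has the factorisation
\[
\zeta_M(s)/\zeta(s)=\prod_{\chi\neq 1}L(s,\chi),
\]
a product of $p-1$ Hecke $L$-functions of conductor dividing $d_{M/\QQ}$. When $M$ is \emph{not} $\delta$-exceptional, none of these $L$-functions has a zero in the box $[1-\delta,1]\times[-(\log d_{M/\QQ})^{2/\delta},(\log d_{M/\QQ})^{2/\delta}]$, and a standard contour shift of the explicit formula (together with the classical Hadamard-factorisation bound on the number of low-lying zeros) gives $|S(y)|\gg y/\log y$ once $y\geq (\log d_{M/\QQ})^{C_p/\delta}$. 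The requirement $y\asymp d_{M/\QQ}^{1/(\ell(p-1))}$ then dictates how small $\delta$ must be taken in terms of $\ell$, $p$, and $\epsilon$.

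Third, to handle the $\delta$-exceptional fields I would apply a log-free zero-density estimate for the family of primitive Dirichlet characters of exact order $p$ with conductor bounded by $X$. By an amplification/large-sieve argument of Kowalski--Michel type, the number of such characters whose $L$-function has a zero in the stated rectangle is $\ll X^{g(\delta)}$ with $g(\delta)$ linear in $\delta$; the precise prescription $\delta=\epsilon_0/(5p+2/(p-1)+4\epsilon_0)$ is exactly what is needed to force $g(\delta)\leq\epsilon_0$, so that at most $O_{p,\epsilon_0}(X^{\epsilon_0})$ fields $M$ with $d_{M/\QQ}\leq X$ contribute. The main obstacle is this density estimate: maintaining uniformity in the conductor while the rectangle's analytic conductor is $T=(\log d_{M/\QQ})^{2/\delta}$ is delicate, and the interaction between $\delta$, $T$, and $X$ is what produces the algebraic form of $\delta$ in the statement. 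The constraint $\epsilon_0<1/(4(p-1))$ is precisely the regime in which the density exponent, when fed back into steps one and two, still yields a non-trivial improvement over the trivial Minkowski bound.
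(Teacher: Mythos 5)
This theorem is quoted in the paper verbatim as Theorem~1.19 of \cite{PTBW}; the paper itself supplies no proof, so there is no internal argument to compare your sketch against. That said, your outline does reproduce the broad three-step strategy used by Pierce, Turnage-Butterbaugh and Wood: (i) the Ellenberg--Venkatesh bound trading $\ell$-torsion in $\text{Cl}_M$ for small split primes, (ii) an effective Chebotarev/prime-counting argument conditional on a zero-free rectangle for the order-$p$ Dirichlet $L$-functions into which $\zeta_M/\zeta$ factors, and (iii) a zero-density estimate over the family of order-$p$ characters of conductor at most $X$ to bound the number of $\delta$-exceptional fields.

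Two points deserve correction or caution. First, the Ellenberg--Venkatesh range for a degree-$p$ field is $1<y\leq d_{M/\QQ}^{1/(2\ell(p-1))}$, not $d_{M/\QQ}^{1/(2\ell)}$; as a result the choice $y\asymp d_{M/\QQ}^{1/(\ell(p-1))}$ lies outside the admissible range (and, if taken literally with $|S(y)|\gg y/\log y$, would produce the too-strong exponent $\tfrac12-\tfrac{1}{\ell(p-1)}+\epsilon$). Taking $y$ just below $d_{M/\QQ}^{1/(2\ell(p-1))}$ is what yields the stated exponent $\tfrac12-\tfrac{1}{2\ell(p-1)}+\epsilon$. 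Second, the assertions that the explicit formula gives $|S(y)|\gg y/\log y$ once $y\geq(\log d_{M/\QQ})^{C_p/\delta}$, and that the zero-density exponent $g(\delta)$ is linear in $\delta$ with precisely the coefficients that lead to $\delta=\epsilon_0/(5p+2/(p-1)+4\epsilon_0)$, are stated as plausible but not derived; verifying these (and hence the specific form of $\delta$ and the constraint $\epsilon_0<1/(4(p-1))$) requires the detailed bookkeeping of \cite{PTBW} and is not something your sketch establishes.
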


In fact, Pierce, Turnage-Butterbaugh and Wood have shown similar statements for a larger set of groups. We can make use of their results to obtain better upper bounds for certain families of groups, in particular, those families where $H$ is a group of the form $C_{p_1}$ where $p_1$ is a prime. 
We now make precise how to make use of their results. Let $G=C_m\rtimes C_{p_1}\in\mathcal{F}$, we compute $N_{m}(\QQ,G;X),$ and as in Diagram \ref{d1}
$M=\text{Fix}(F).$
Let $D_{\epsilon_0}(X)$ be the set of $\delta$-exceptional fields $M/\QQ$ and $|d_{M/\QQ}|\leq X$. 
Let $w_A=|\{M/\QQ: \nm(d_{M/\QQ})=A\}|$. We may write equation \eqref{gal22} as

\eqalnt
&N_m(\QQ,C_m\rtimes C_{p_1};X)\ll
X^{1/m(1-p^{-1})+\epsilon}\sum_{A\leq X^{t/(m-1)}}w_AA^{\mathcal{D}-\frac{m-1}{mt(1-p^{-1})}+\epsilon}\\
&\ll X^{1/m(1-p^{-1})+\epsilon}
\left(\sum_{\substack{A\leq X^{t/(m-1)} \\ M\notin D_{\epsilon_0}(X^{t/(m-1)})}}w_AA^{\mathcal{D}_1-\frac{m-1}{mt(1-p^{-1})}+\epsilon}+\sum_{\substack{A\leq X^{t/(m-1)} \\ M\in D_{\epsilon_0}(X^{t/(m-1)})}}w_AA^{\mathcal{D}_2-\frac{m-1}{mt(1-p^{-1})}+\epsilon}\right).\\
\eeqal
Here we have $\mathcal{D}_1=1/2(1-1/(p_1m-m))$ and $\mathcal{D}_2=1/2$. By Theorem \ref{ptw}  \[\sum_{\substack{A\leq X\\ M\in D_{\epsilon_0}(X)}}w_A=O(X^{\epsilon_0}).\] Consequently the above is 
\eqalnt
&\ll X^{1/m(1-p^{-1})+\epsilon}
\left(X^{\frac{t}{m-1}\left(\frac{1}{p_1-1}+\frac{1}{2}-\frac{1}{2m(p_1-1)}-\frac{m-1}{mt(1-p^{-1})}\right)+\epsilon}+X^{\frac{t}{m-1}\left(\epsilon_0+\frac{1}{2}-\frac{m-1}{mt(1-p^{-1})}\right)+\epsilon}+O(1)\right).\\
\eeqal
The second term in the parenthesis will not contribute to the main term as we can take $\epsilon_0<1/(4p_1-4)$. This implies that 
\eqal
&N_m(\QQ,C_m\rtimes C_{p_1};X)\ll  X^{1/m(1-p^{-1})+\epsilon}+X^{\frac{t}{m-1}\left(\frac{1}{p_1-1}+\frac{1}{2}-\frac{1}{2m(p_1-1)}\right)+\epsilon}.\eeqal
Using that above, we can show that $N_{103}(\QQ,C_{103}\rtimes C_{17};X)\ll X^{0.09369+\epsilon}.$
We can similarly apply this technique to $N_{mt}(\QQ,G;X)$ for $G\in\mathcal{F}_1.$
\section{Final remarks}

Finally we describe the limitations  of this method. We will use the same notation here as that in Notation \ref{H-int}. Let $N/N_1/M_1/k$ be a tower of number fields with the following conditions:

1. $\text{Gal}(N/k)=\text{Gal}(\hat{N_1}/k)=G\in\mathcal{F}_1$. 

2. $M$ is the fixed field of $F$,  and  $M_1$ is any subfield of $M/k$. Let number of $M_1/k$ of degree $[M_1:k]$ with $\nm_{k/\QQ}(d_{M_1/k})\leq X$ be bounded above by $X^{a(H,[M_1:k])}.$

We classify when we need to address $|\text{Cl}_{M_1}[[N_1:M_1]]|$ and when we do not have to, when finding an upper bound for $N_{[N_1:k]}(k,G;X).$ We have the following corresponding field diagram.
\[
  \begin{tikzcd}
    N   &   \\  
  N_1\arrow{u}    &     M=Fix(F)   \arrow{ul}\\  
       & M_1\arrow{u}\arrow[swap]{ul}  \\   
     k\arrow{uu} \arrow[swap]{ur} &  &
  \end{tikzcd}
\]
We have $d_{N_1/k}=d_{M_1/k}^{[N_1:M_1]}\nm_{M/\QQ}(d_{N_1/M_1}).$ We go over the same process as in equation \eqref{fin}. 
Let $R$ be the smallest exponent of a prime $\ell\in\NN$ that is unramified in $M_1$ and ramified tamely in $N_1/\QQ$.
Recall by Lemma \ref{powerlem}, $R=p-1$ if $N_1/M_1$ is not a Galois extension, and $R=[N_1:k](1-p^{-1})$ in the case that $N_1/k$ is a Galois extension. If 
\eqal\label{ineq1}
a(H,[M_1:k])+|\text{Cl}_{M_1}[[N_1:M_1]]|-\frac{[N_1:M_1]}{R}\leq0
\eeqal
then $N_{[N_1:k]}(k,G;X)\ll X^{1/R+\epsilon}.$ 
In the case that \eqref{ineq1} does not hold, we have, 
$N_{[N_1:k]}(k,G;X)\ll X^{r+\epsilon}$ where 
\[r=\frac{a(H,[M_1:k])+|\text{Cl}_{M_1}[[N_1:M_1]]|}{[N_1:M_1]}.\]
In the case that \eqref{ineq1} does hold, improving the upper bound for $|\text{Cl}_{M_1}[[N_1:M_1]]|$ does not play a further role in improving $N_{[N_1:k]}(k,G;X).$ 
The only obstacle from reaching Malle's conjectured upper bounds in these cases is being able to increase the value of the constant $R$ (as done by Kl{\"u}ners in Lemma 4 of \cite{klu12}). As Lemma \ref{powerlem} indicates, the inequality \eqref{ineq1} is less likely to hold in the case that $N_1/k$ is a Galois extension. In fact, when $N_1/k$ is Galois, $N_1=N$ and $M_1=M$, and if Malle's conjecture is realized for $N_{[M:k]}(k,H;X),$ then \eqref{ineq1} holds when

\[\frac{1}{[M:k]}\left(\frac{p_1}{p_1-1}-\frac{p}{p-1}\right)+|\text{Cl}_{M}[[N:M]]|\leq0.\]
Here $p$ and $p_1$ above are the smallest prime divisors of $[M:k]$ and $[N:M]$ respectively. Under the assumption of the $\ell$-torsion conjecture, the above holds only if $p<p_1.$ However, there may be instances where results weaker than the $\ell$-torsion conjecture may suffice. For instance when computing $N_8(k,F_{56};X)$ where showing $\text{Cl}_M[2]\ll d_{M/\QQ}^{1/12}$ suffices to show an upper bound that matches Malle's conjecture.

\section{Acknowledgements}
I would like to thank my advisor Frank Thorne, and Alex Duncan for many insightful and encouraging discussions.

\InputIfFileExists{Frobenius-2.bbl}

\end{document}